\documentclass[12pt]{amsart}  

\usepackage{amsmath}
\usepackage{amsthm}                 
\usepackage{amssymb}
\usepackage{xypic}
\usepackage{xcolor}
\usepackage{graphicx}
\usepackage{enumitem}

\usepackage{tikz}
\usetikzlibrary{cd}

\usepackage{hyperref}
\hypersetup{
    colorlinks,
    linkcolor={red!50!black},
    citecolor={blue!50!black},
    urlcolor={blue!80!black}
}
\usepackage{aliascnt}[2009/09/08] 


\newcommand{\IP}{{\mathbb{P}}}

\newcommand{\ko}{{\mathcal O}}

\newcommand{\ki}{{\mathcal I}}
\newcommand{\ka}{{\mathcal A}}

\newcommand{\kc}{{\mathcal C}}

\newcommand{\kp}{{\mathcal P}}
\newcommand{\ke}{{\mathcal E}}

\newcommand{\CohE}{{\Coh_\ke(X)}}

\newcommand{\trke}{{\ke}}   

\newcommand{\hd}{\text{hd}}
\newcommand{\pd}{\text{pd}}

\newcommand{\gldim}[1]{\mathrm{gl.dim}(#1)}

\newcommand{\kk}{{\mathbf{k}}}

\newcommand{\sod}[1]{\langle #1\rangle}

\newcommand{\inv}{^{-1}}

\newcommand{\coloneqq}{\mathrel{\mathop:}=}

\newcommand{\ASS}{{\rm(\dag)}}
\DeclareMathOperator{\modules}{-mod}
\newcommand{\mmod}[1]{#1\modules}

\DeclareMathOperator{\Hom}{\mathrm{Hom}}

\DeclareMathOperator{\Ext}{\mathrm{Ext}}
\DeclareMathOperator{\ext}{\mathrm{ext}}

\DeclareMathOperator{\Coh}{\mathrm{Coh}}

\DeclareMathOperator{\add}{\mathrm{add}}

\DeclareMathOperator{\End}{{\mathrm{End}}}


\newcommand{\RRR}{\mathsf{R}}
\newcommand{\DDD}{\mathcal{D}}
\newcommand{\Db}{\DDD^b}

\newcommand{\image}{\text{Im}}

\newcommand{\isom}{ \xrightarrow{ \,\smash{\raisebox{-0.65ex}{\ensuremath{\scriptstyle\sim}}}\,}}

\newcommand{\onto}{\to\!\!\!\!\!\to}
\newcommand{\into}{\hookrightarrow}

\newcommand{\blank}{-}

\newcommand{\univext}[2]{ \genfrac{(}{)}{0pt}{1}{#1}{#2} }
\newcommand{\sunivext}[2]{ \text{\raisebox{0.2ex}{\scalebox{0.8}{$\genfrac{(}{)}{0pt}{1}{#1}{#2}$}}} }

\newcommand{\bib}[6]{{\bibitem{#2} #3: {\emph{#4},} #5#6.}}

\newcommand{\harxiv}[1]{ \href{http://arxiv.org/abs/#1}{\texttt{arXiv:#1}}}
\newcommand{\hyref}[2]{\hyperref[#2]{#1~\ref*{#2}}}




\newtheorem{theorem}{Theorem}[section]

\newtheorem*{theorem*}{Theorem}

  \newaliascnt{proposition}{theorem}
  \newtheorem{proposition}[proposition]{Proposition}
  \aliascntresetthe{proposition}

  \newaliascnt{lemma}{theorem}
  \newtheorem{lemma}[lemma]{Lemma}
  \aliascntresetthe{lemma}

  \newaliascnt{corollary}{theorem}
  \newtheorem{corollary}[corollary]{Corollary}
  \aliascntresetthe{corollary}

  \newtheorem{theoremalpha}{Theorem}

\theoremstyle{definition}

  \newaliascnt{definition}{theorem}
  \newtheorem{definition}[definition]{Definition}
  \aliascntresetthe{definition}

  \newaliascnt{remark}{theorem}
  \newtheorem{remark}[remark]{Remark}
  \aliascntresetthe{remark}

  \newaliascnt{question}{theorem}
  
  \aliascntresetthe{question}

  \newaliascnt{example}{theorem}
  \newtheorem{example}[example]{Example}
  \aliascntresetthe{example}


\begin{document}

\title{Tilting chains of negative curves on rational surfaces}

\author{Lutz Hille}
\author{David Ploog}

\begin{abstract}
We introduce the notion of exact tilting objects, which are partial tilting objects $T$ inducing an equivalence between the abelian category generated by $T$ and the category of modules over the endomorphism algebra of $T$.

Given a chain of sufficiently negative rational curves on a rational surface, we construct an exceptional sequence whose universal extension is an exact tilting object. For a chain of $(-2)$-curves, we obtain an equivalence with modules over a well known algebra.
\end{abstract}

\begingroup\renewcommand\thefootnote{}%
\footnote{MSC 2010: 14F05, 16E35, 16S38, 18E30}
%
\footnote{Keywords: tilting bundle, curves on surfaces, abelian category, derived category}
\footnote{Contact: \texttt{lhill\_01@uni-muenster.de},
                   \texttt{dploog@math.fu-berlin.de}}
\addtocounter{footnote}{-2}\endgroup

\maketitle

\section*{Introduction} 

\noindent
Tilting objects give rise to equivalences between derived categories but when restricted to the underlying abelian categories, they almost never induce equivalences. In this article, we are interested in equivalences of abelian categories. Therefore, we need to consider partial tilting objects. The aim of this paper is to find conditions when a partial tilting object induces an equivalence of abelian categories. This will be applied to surfaces with chains of negative curves. Before we start with our geometric application, we consider the problem abstractly.

Let $T$ be a partial tilting object in a $\kk$-linear abelian category $\ka$.
Then there is a well-established equivalence of triangulated categories
 $\RRR\Hom(T,\blank) \colon \sod{T} \isom \Db(\mmod{\Lambda})$,
where we write $\Lambda=\End(T)$ for the endomorphism algebra and $\sod{T}$ for the triangulated category generated by $T$ which is closed under summands.
We say that $T$ is \emph{exact tilting} if all surjective morphisms in $\add(T)$ split, see \autoref{def:exact_tilting}.

\begin{theoremalpha}
Let $T$ be an exact tilting object of $\ka$. Then there is an equivalence of abelian categories
 $\Hom(T,\blank) \colon \sod{T}\cap\ka \isom \mmod{\Lambda}$.

Moreover, $\sod{T}\cap\ka$ coincides with the full subcategory of $\ka$ consisting of objects admitting a left resolution by objects of $\add(T)$.
 \end{theoremalpha}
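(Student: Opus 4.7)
The plan is to use the derived equivalence $\Phi := \RRR\Hom(T,-) \colon \sod{T} \isom \Db(\mmod{\Lambda})$, with quasi-inverse $\Psi := -\otimes^{\mathbf{L}}_{\Lambda} T$, to transport the standard $t$-structure on $\Db(\mmod{\Lambda})$ onto $\sod{T}$. Let $\kh := \Psi(\mmod{\Lambda}) \subseteq \sod{T} \subseteq \Db(\ka)$ denote the resulting heart. By construction, $\Hom(T,-) = H^0\Phi$ restricts to an equivalence $\kh \isom \mmod{\Lambda}$ automatically, so the entire theorem reduces to the chain of equalities $\kh = \sod{T}\cap\ka = \kb$, where $\kb \subseteq \ka$ is the full subcategory of objects admitting a left $\add(T)$-resolution.

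The decisive step, and the sole place the exact-tilting hypothesis enters, is to show $\kh \subseteq \ka$: that is, $\Psi(M) \in \ka$ for every $M \in \mmod{\Lambda}$. Given a projective resolution $P_\bullet \to M$ in $\mmod{\Lambda}$, the complex $P_\bullet \otimes_\Lambda T$ represents $\Psi(M)$ in $\Db(\ka)$ and consists of $\add(T)$-objects. I would argue by induction on the length of the resolution (or on the projective dimension of $M$) that the differentials $P_i \otimes_\Lambda T \to P_{i-1} \otimes_\Lambda T$ have images lying again in $\add(T)$, whereupon exact tilting splits the resulting epimorphisms in $\add(T)$ and forces the complex to be acyclic in negative degrees. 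Simultaneously, this procedure produces a left $\add(T)$-resolution of $\Psi(M)$ in $\ka$, so in fact $\kh \subseteq \kb$.

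Once $\kh \subseteq \ka$ is established, $\Psi$ is $t$-exact (right $t$-exactness of the derived tensor is automatic), hence a $t$-exact equivalence between $\Db(\mmod{\Lambda})$ and $\sod{T}$ with the $t$-structure restricted from $\Db(\ka)$; so hearts correspond and $\sod{T}\cap\ka = \kh$. The remaining inclusion $\kb \subseteq \kh$ is formal: given a left $\add(T)$-resolution $T_\bullet \to X$, the vanishing $\Ext^{>0}(T, T_i) = 0$ makes $\Hom(T, T_\bullet) \to \Hom(T, X)$ into a projective resolution in $\mmod{\Lambda}$, whence $\Ext^{>0}(T, X) = 0$ and $\Phi(X)$ is concentrated in degree zero.

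The principal obstacle is the inductive argument in the second step: exact tilting is a condition internal to $\add(T)$, whereas left $t$-exactness of $\Psi$ (equivalently, flatness of $T$ over $\Lambda$) requires controlling the interaction of $T$ with arbitrary submodules of $\Lambda$-projectives. The delicate point is ensuring that the image of an $\add(T)$-morphism between $\add(T)$-objects lies back in $\add(T)$; only then does exact tilting supply the splittings that make the tensored projective resolution into an honest left $\add(T)$-resolution in $\ka$. A natural variant is to track kernels rather than images, since exact tilting directly provides $\add(T)$-kernels of $\add(T)$-epimorphisms. Once this technical point is settled, everything else is bookkeeping.
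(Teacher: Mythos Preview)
Your framing via the transported $t$-structure is correct, and the parts you call ``formal'' (the inclusion $\kb \subseteq \kh$ from $\Ext^{>0}(T,T_i)=0$, and the identification $\kh = \sod{T}\cap\ka$ once $\kh \subseteq \ka$ is known) are fine. The gap is exactly where you place it, and it is a genuine one: exact tilting says nothing about images of arbitrary $\add(T)$-morphisms. It only splits \emph{surjections} between $\add(T)$-objects (and hence, as you note, gives $\add(T)$-kernels of such surjections). But the differentials in $P_\bullet \otimes_\Lambda T$ are not surjections in $\ka$; the leftmost one corresponds to a monomorphism of projective $\Lambda$-modules, and that property need not survive $-\otimes_\Lambda T$. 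So neither the ``images'' route nor the ``kernels'' variant gets started without an additional hypothesis you do not have.

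The paper sidesteps this by reversing the direction of attack: instead of proving $\Psi(\mmod{\Lambda}) \subseteq \ka$, it proves $\sod{T}\cap\ka \subseteq \Phi^{-1}(\mmod{\Lambda})$. Given $F \in \sod{T}\cap\ka$, one represents $F$ by a bounded complex $D^\bullet$ with each $D^i \in \add(T)$. The decisive gain from starting on the $\ka$-side is that $D^\bullet$ already has cohomology concentrated in degree $0$. Hence, whenever $D^a \neq 0$ for some $a>0$, the rightmost differential $D^{a-1} \to D^a$ is a surjection \emph{between $\add(T)$-objects}, and now exact tilting applies on the nose: the surjection splits, the contractible summand $[D^a \xrightarrow{\id} D^a]$ is discarded, and iteration truncates $D^\bullet$ to nonpositive degrees. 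This produces a left $\add(T)$-resolution of $F$ and shows $\Phi(F)\in\mmod{\Lambda}$ in one stroke. The cohomology concentration is precisely what manufactures the $\add(T)$-surjections needed to invoke the hypothesis; your direction lacks this input, which is why the induction stalls.
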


See \autoref{prop:exact_tilt} for the proof. In \autoref{prop:getting_exact_tilt}, we show how exact tilting objects arise as universal extensions of exceptional sequences of objects from $\ka$ with special properties. (See \autoref{sub:universal_extensions} for universal extensions of exceptional sequences with vanishing $\Ext^{>1}$.)

\bigskip

Later we are mainly interested in geometric applications. In fact, for any rational surface there always exists a tilting object \cite{HillePerling}. Starting with a chain of curves, we consider an exceptional sequence adapted to this chain. One expects to understand sheaves in a certain neighbourhood using the corresponding exact tilting objects. For further results on existence and further properties of exceptional sequences on rational surfaces, we refer to \cite{HillePerling1}. For exceptional sequences that are not strong, the algebras can be chosen to be quasi-hereditary. Essentially, this means that the category of filtered modules (with respect to the exceptional sequence) is well understood. We use this property at several places, however, never need the theory of quasi-hereditary algebras in more detail.

Our main example of exact tilting concerns chains of rational curves of negative self-intersection (for short: negative curves) on rational surfaces. More precisely, we study the abelian and triangulated categories generated by ideal sheaves of a chain of negative curves which form a special exceptional sequence. The universal extension of this sequence is an exact tilting bundle. For a more precise statement, see \autoref{thm:abelian_category_geometry}.

\begin{theoremalpha} \label{thma:geometry}
Let $X$ be a smooth, projective surface such that $\ko_X$ is exceptional, and let $C_1,C_2,\ldots,C_t$ be an $A_t$-chain of smooth, rational curves with all $C_i^2\leq-2$. Then $\ke \coloneqq (\ko(-C_1-\cdots-C_t),\ldots,\ko(-C_1),\ko)$ is an exceptional sequence such that its universal extension $T$ is an exact tilting bundle, i.e.\ the associated equivalence of triangulated categories restricts to an equivalence of abelian categories:
\[ \xymatrix@C=8em{
\sod{\ke}            \ar[r]^\cong_{\RRR\Hom(T,\blank)} & \Db(\mmod{\End(T)}) \\
\CohE \coloneqq \trke\cap\Coh(X) \ar[r]^\cong_{\Hom(T,\blank)} \ar@{^{(}-}[u] & \mmod{\End(T)} \ar@{^{(}-}[u]
. } \]
\end{theoremalpha}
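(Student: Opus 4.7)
The plan is to verify three things: that $\ke$ is an exceptional sequence; that $\Ext^{\geq 2}$ vanishes among its terms; and that the resulting universal extension $T$ is exact tilting. The upper equivalence in the diagram will then be standard tilting theory for the partial tilting bundle $T$, and the lower equivalence will follow from \autoref{prop:exact_tilt}.

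For the Ext computations, set $L_i \coloneqq \ko(-C_1 - \cdots - C_i)$ (so $L_0 = \ko_X$ and $\ke = (L_t, \ldots, L_0)$) and $D_{k,m} \coloneqq C_{k+1} + \cdots + C_m$ for $0 \leq k < m \leq t$. All relevant $\Ext$'s reduce to $H^*(X, \ko(\pm D_{k,m}))$, which I would attack using
\[
0 \to \ko(-D_{k,m}) \to \ko_X \to \ko_{D_{k,m}} \to 0
\quad\text{and}\quad
0 \to \ko_X \to \ko(D_{k,m}) \to \ko_{D_{k,m}}(D_{k,m}) \to 0.
\]
Since $D_{k,m}$ is a tree of smooth rational curves, $H^0(\ko_{D_{k,m}}) = \kk$ and $H^{\geq 1}(\ko_{D_{k,m}}) = 0$. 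Combined with exceptionality of $\ko_X$ and the observation that restriction of global constants gives an isomorphism $H^0(\ko_X) \to H^0(\ko_{D_{k,m}})$, the first sequence forces $H^*(\ko(-D_{k,m})) = 0$, which is precisely the exceptional condition. For the $\Ext^{\geq 2}$-vanishing in the opposite direction, the hypothesis $C_i^2 \leq -2$ makes $D_{k,m} \cdot C_i$ non-positive on every component of the sub-chain and strictly negative at its endpoints $C_{k+1}$ and $C_m$; a short induction along the chain (a section must vanish on an endpoint and then propagates inward to zero) gives $H^0(\ko_{D_{k,m}}(D_{k,m})) = 0$, and the second sequence then yields $H^2(\ko(D_{k,m})) = 0$. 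Feeding this into the universal extension machinery of \autoref{sub:universal_extensions} produces a locally free $T = T_0 \oplus \cdots \oplus T_t$ (with $T_0 = \ko_X$) with $\Ext^{>0}(T, T) = 0$, i.e.\ a partial tilting bundle with $\sod{T} = \sod{\ke}$, giving the upper row of the diagram.

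The main obstacle is then the exact tilting condition: every surjection in $\add T$ must split. My plan is to analyze the summands $T_i$ through the explicit extensions that define them, compute the Hom spaces $\Hom(T_i, T_j)$, and show that nonzero morphisms between distinct summands have a triangular structure with respect to the exceptional-sequence ordering --- in particular, any non-identity map $T_i \to T_j$ factors through a proper subsheaf of $T_j$ and cannot be surjective. This forces any surjection between direct sums of $T_i$'s to be block-triangular with diagonal blocks $T_i^{a_i} \twoheadrightarrow T_i^{b_i}$ that are themselves surjections, each of which splits; the triangular structure then allows one to solve inductively for a global splitting. A representation-theoretic alternative, hinted at in the introduction, is to exploit the quasi-hereditary structure on $\End(T)$ and identify $\add T \cap \CohE$ with the indecomposable projective modules over $\End(T)$, for which splitting is automatic. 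Once the exact tilting condition is verified, \autoref{prop:exact_tilt} supplies the desired equivalence $\Hom(T,\blank) \colon \CohE \isom \mmod{\End(T)}$.
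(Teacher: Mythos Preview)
Your cohomology computations are correct and match the paper's \autoref{lem:geometric_assumption}; the propagation argument for $H^0(\ko_{D_{k,m}}(D_{k,m}))=0$ is a valid variant of the paper's induction. Observe that this same vanishing, fed into your second short exact sequence, also yields $\dim\Hom(L_m,L_k)=1$ for all $m\geq k$. You do not record this, but it is precisely the missing hypothesis in condition~\ASS, and the paper's route is simply: verify \ASS\ (which you have essentially done) and then invoke the abstract \autoref{prop:getting_exact_tilt}.

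Your direct attack on the exact tilting condition has a genuine gap. First, the ``triangular structure with respect to the exceptional-sequence ordering'' is false for the universal extension: for an exceptional pair $(A,B)$ with extension $E=\sunivext{A}{B^r}$, both $\Hom(E,B)\cong\Hom(A,B)$ and $\Hom(B,E)\cong\Ext^1(A,B)^*$ are nonzero, so morphisms run both ways between the summands of $T$. Second, even granting that every non-identity map $T_i\to T_j$ is individually non-surjective, this does \emph{not} force a surjection $\bigoplus T_i^{a_i}\twoheadrightarrow T_j$ to contain a surjective $T_j$-component: the images of several non-surjective maps can jointly cover $T_j$. What the paper actually uses (in the proof of \autoref{prop:getting_exact_tilt}) is stronger: all maps into a given summand land in one \emph{fixed} proper subobject --- $\image(E\to B)=A\subsetneq B$ and $\image(B\to E)\subset B^r\subsetneq E$ --- and this rigidity comes exactly from $\dim\Hom(E_i,E_j)\leq 1$. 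Your argument can be salvaged along these lines, but you must use the $\hom=1$ condition, not merely non-surjectivity.

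Finally, the quasi-hereditary alternative you float is circular in this paper: quasi-heredity of $\End(T)$ is deduced \emph{from} the abelian equivalence (via $\gldim\Lambda\leq\gldim\Coh(X)=2$ and \cite{Dlab-Ringel0}), not used to establish it.
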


\noindent
This result is one technical tool used in \cite{Kalck-Karmazyn} for a Kn\"orrer type category equivalence.

For an exact tilting sheaf, the connection between geometry and representation theory provided by tilting is even stronger than usual. On the negative side, such a strong connection can never work for the category of coherent sheaves itself, since it does not contain projective objects and any equivalence between abelian categories preserves projective objects. Thus we are forced to consider partial tilting sheaves to get an equivalence between abelian categories. On the other hand, this equivalence provides us with projective objects in $\CohE$. Thus, we essentially need to construct sufficiently many projective objects (a projective generator) to get the result.

To illustrate the theorem in a small example, we consider just one smooth, rational curve $C$ on a rational surface $X$. Put $r\coloneqq -(C^2+1)$. For $r \leq 0$, i.e.\ $C^2\geq-1$, the bundle $\ko \oplus \ko(-C)$ is a tilting bundle; it is exact only for $C^2 = -1$. On the other hand, for $r \geq 1$, i.e.\ $C^2\leq -2$, we can consider the universal extension of $\ko(-C)$ by $\ko$; it is
 $0 \to \ko^r \to \univext{\ko(-C)}{\ko^r} \to \ko(-C) \to 0$.
The direct sum $\ko\oplus\univext{\ko(-C)}{\ko^r}$ is an exact tilting bundle for $\sod{\ko(-C),\ko}$. For details, see \autoref{ex:exact_tilting}.

The case of a chain of $(-2)$-curves is of particular interest, since there exist many spherical objects in the subcategory $\CohE$. Those spherical objects induce a braid group action by equivalences of the derived category.
In this particular case, the algebra $\Lambda$ of \hyref{Theorem}{thma:geometry} is well-known in representation theory: it is the Auslander algebra of $\kk[T]/T^{t+1}$. The finite-dimensional algebra $\Lambda$ has previously been studied by several authors, see \cite{BHRR} for references.

Here, we study --- from the geometric point of view --- categories encompassing modules over the Auslander algebra of $\kk[T]/T^{t+1}$.

\subsubsection*{Acknowledgements}
A lot of this research has been conducted at Oberwolfach on a Research-in-pairs stay for which we are very grateful. We also thank Martin Kalck for interest in this work and valuable input.

\section{Exact tilting and adapted exceptional sequences}

\noindent
All varieties, algebras and categories are over a ground field $\kk$ which is assumed to be algebraically closed.

\subsection{Exact tilting objects} \label{sub:exact_tilting}

Let $\ka$ be an abelian category, and $T\in\ka$ be a partial tilting object, i.e.\ $\Ext^{>0}(T,T)=0$ with endomorphism algebra $\Lambda \coloneqq \End(T)$. We write $\sod{T}$ for the triangulated category generated by $T$ (closed under summands) inside $\Db(\ka)$. The category $\sod{T}\cap\ka$ is, in general, additive but not abelian.

Classical tilting theory gives an equivalence of triangulated categories
$\RRR\Hom(T,\blank) \colon \sod{T} \isom \Db(\mmod{\Lambda})$. We introduce the following special property which, roughly saying, states that there are no non-trivial surjections in $T$.

\begin{definition} \label{def:exact_tilting}
A partial tilting object $T\in\ka$ is called \emph{exact tilting} if every surjection between objects in $\add(T)$ splits.
\end{definition}

Recall that $\add(T)$ is the additive category generated by $T$, i.e.\ the subcategory of $\ka$ consisting of finite direct sums of summands of $T$. For a concrete exact tilting object from geometry, see \autoref{sec:chains}.

\begin{lemma} \label{lem:exact_tilting}
A partial tilting object $T$ is exact tilting if and only if $S\onto S'$ implies $\Hom(T_i,S)\onto\Hom(T_i,S')$ for any indecomposable summand $T_i$ of $T$ and $S,S'\in\add(T)$.
\end{lemma}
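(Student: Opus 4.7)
The plan is to prove the two directions separately, both by elementary diagram-chasing using only that $\add(T)$ is closed under finite direct sums of indecomposable summands of $T$.

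For the ``only if'' direction, I would start from an arbitrary surjection $f\colon S\onto S'$ in $\add(T)$ and use that $f$ splits by hypothesis, so that there is a section $s\colon S' \to S$ with $f\circ s = \id_{S'}$. Applying the functor $\Hom(T_i,\blank)$ to this splitting yields a section $\Hom(T_i,s)$ of $\Hom(T_i,f)$, forcing $\Hom(T_i,f)$ to be surjective for every indecomposable summand $T_i$ of $T$. This direction is essentially a formal consequence of functoriality.

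For the ``if'' direction, the task is to promote the pointwise lifting property to an actual splitting. I would fix a surjection $f\colon S\onto S'$ in $\add(T)$ and decompose $S'=\bigoplus_{k=1}^{n} T_{i_k}$ as a finite direct sum of indecomposable summands of $T$, with inclusions $\iota_k\colon T_{i_k}\into S'$ and projections $\pi_k\colon S' \onto T_{i_k}$ satisfying $\sum_k \iota_k\pi_k = \id_{S'}$. By hypothesis, each $\iota_k \in \Hom(T_{i_k},S')$ lies in the image of $\Hom(T_{i_k},f)$, so there exists $g_k\colon T_{i_k} \to S$ with $f\circ g_k = \iota_k$. Setting $g \coloneqq \sum_k g_k\circ \pi_k \colon S' \to S$, a direct computation gives $f\circ g = \sum_k \iota_k\circ\pi_k = \id_{S'}$, so $f$ splits.

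There is no real obstacle here, since the hypothesis is tailored exactly to produce lifts of the inclusions of indecomposable summands. The only point that deserves attention is ensuring that the hypothesis is applied to the indecomposable summands $T_{i_k}$ of $T$ (and not to arbitrary objects in $\add(T)$), so that the assembly $g = \sum_k g_k\circ\pi_k$ is well-defined; once the decomposition $S' = \bigoplus T_{i_k}$ is fixed, this is automatic. This also makes transparent the equivalent formulation ``$\Hom(T,S)\onto\Hom(T,S')$ for every surjection in $\add(T)$'', which is what will be used in subsequent arguments.
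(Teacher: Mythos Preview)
Your proof is correct and follows essentially the same approach as the paper's. The ``only if'' direction is identical, and in the ``if'' direction your argument is actually cleaner: the paper phrases it as splitting each composite $S\onto S'\onto S'_i$ and then ``combining'' the resulting sections, whereas you lift the inclusions $\iota_k$ directly along $f\colon S\onto S'$, which makes the verification $f\circ g=\id_{S'}$ immediate.
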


\begin{proof}
If $T$ is exact tilting, then the surjection $S\onto S'$ admits a section $\sigma\colon S'\to S$. Hence any morphism $f\colon T_i\to S'$ is induced by $\sigma f$.

On the other hand, assume that $T$ satisfies the property of the lemma, and let $S\onto S'$ be a surjection of sums of summands of $T$. If $S'$ is indecomposable, then taking $T_i=S'$ in that property gives the desired splitting right away. If $S'$ is decomposable, then the induced surjections onto direct summands of $S'$, i.e.\ $S\onto S'\onto S'_i$, split and can be combined to a section $S'\to S$.
\end{proof}

\begin{proposition} \label{prop:exact_tilt}
Let $T\in\ka$ be an exact tilting object and $\Lambda = \End(T)$.
Then the equivalence $\Phi = \RRR\Hom(T,\blank) \colon \sod{T} \isom \Db(\mmod{\Lambda})$
restricts to an equivalence of abelian categories $\sod{T}\cap\ka \isom \mmod{\Lambda}$.
\end{proposition}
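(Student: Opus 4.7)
The plan is to identify $\sod{T}\cap\ka$ with the heart $\kh := \Phi^{-1}(\mmod{\Lambda}) = \{X \in \sod{T} : \Ext^{\neq 0}(T,X)=0\}$ of the t-structure on $\sod{T}$ pulled back from the standard one on $\Db(\mmod{\Lambda})$ via $\Phi$. As the heart of a bounded t-structure, $\kh$ is automatically abelian and $\Phi|_\kh$ is an equivalence of abelian categories onto $\mmod{\Lambda}$, so the proposition reduces to the identity $\kh = \sod{T}\cap\ka$ as subcategories of $\Db(\ka)$.

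The inclusion $\sod{T}\cap\ka \subseteq \kh$ is the easier half: $\Ext^{<0}(T,X)=0$ holds automatically, and $\Ext^{>0}(T,X)=0$ follows by standard dimension-shifting once we know $X$ admits a left resolution $\cdots\to T_1\to T_0\to X\to 0$ in $\ka$ with $T_i\in\add(T)$, which is precisely the ``moreover'' clause of Theorem~A; I would prove the two statements of the theorem in tandem. For the reverse inclusion $\kh\subseteq\sod{T}\cap\ka$, my plan is induction on the projective dimension $n$ of $M := \Phi(X)$, using that $\gldim(\Lambda)$ is finite. The base case $n=0$ is immediate: $M$ is projective and $X \in \add(T) \subseteq \ka$. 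For the inductive step, pick a short exact sequence $0\to M'\to P\to M\to 0$ in $\mmod{\Lambda}$ with $P$ projective, $\pd(M')=n-1$, and $T_0 := \Phi^{-1}(P) \in \add(T)$; by induction $X' := \Phi^{-1}(M')\in\ka$, and the induced triangle $X'\to T_0\to X\to X'[1]$ in $\sod{T}$ has two vertices in $\ka$. The long exact $\ka$-cohomology sequence reduces $X \in \ka$ to showing the morphism $f\colon X'\to T_0$ is a monomorphism in $\ka$.

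The main obstacle lies in establishing this injectivity. For every indecomposable summand $T_i$, applying $\Hom(T_i,-)$ to $f$ yields, via $\Phi$, the injection $e_i M'\hookrightarrow e_iP$ in $\mmod{\Lambda}$ (by exactness of $\Hom_\Lambda(\Phi(T_i),-)$, since $\Phi(T_i)$ is projective), so $\Hom(T_i, K)=0$ for $K := \ker_\ka f$; however concluding $K=0$ is not automatic, since $T$ need not be a generator of $\ka$. My plan to close this gap is to exploit the exact tilting hypothesis by lifting the \emph{entire} projective resolution $\cdots\to P_k\to\cdots\to P_0$ of $M$ to a complex $T_\bullet$ in $\add(T)$, and proving inductively in $k$ that each $k$-th syzygy of $M$ lifts from $\mmod{\Lambda}$ to a genuine subobject of $T_{k-1}$ in $\ka$, invoking \autoref{lem:exact_tilting} to force $\image(d_k) = \ker(d_{k-1})$ in $\ka$ at each stage; this simultaneously shows $T_\bullet$ is a bona fide left resolution of $X$ in $\ka$, yielding the inclusion $\kh\subseteq\sod{T}\cap\ka$ together with the ``moreover'' statement of Theorem~A used in the first inclusion above. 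Once both inclusions are established, the abelian structures of $\kh$ and $\sod{T}\cap\ka$ automatically match, since their kernels and cokernels are realized as the same truncated cones in $\sod{T}$.
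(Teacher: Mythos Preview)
Your overall plan---reduce the proposition to showing $\sod{T}\cap\ka = \kh$ for $\kh \coloneqq \Phi^{-1}(\mmod{\Lambda})$---is exactly right and is what the paper does implicitly. But the two halves are organised differently from the paper, and one of yours has a real gap.

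\medskip\noindent\textbf{Forward inclusion $\sod{T}\cap\ka \subseteq \kh$.}
The paper does \emph{not} deduce this from the ``moreover'' clause; rather, it proves both simultaneously by a truncation argument that your outline is missing. Any $F\in\sod{T}\cap\ka$ is isomorphic in $\Db(\ka)$ to a bounded complex $D^\bullet$ with each $D^i\in\add(T)$. Since $F\in\ka$, this complex has $\ka$-cohomology only in degree~$0$; in particular the rightmost differential $D^{a-1}\to D^a$ is a surjection inside $\add(T)$, and the exact tilting hypothesis says precisely that such a surjection splits. One peels off the contractible summand $[D^a\xrightarrow{\id}D^a]$ and iterates until $D^\bullet$ sits in degrees $\leq 0$, i.e.\ is a left $\add(T)$-resolution of $F$. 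Applying $\Hom(T,-)$ then gives a projective resolution of $\Phi(F)$, so $\Phi(F)\in\mmod{\Lambda}$. This single truncation step is where the exact tilting hypothesis is actually used; your proposal never produces a surjection in $\add(T)$ to which the hypothesis could be applied.

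\medskip\noindent\textbf{Reverse inclusion $\kh \subseteq \sod{T}\cap\ka$.}
You correctly isolate the obstacle: in the triangle $X'\to T_0\to X$ one must show that $f\colon X'\to T_0$ is a monomorphism in $\ka$, and $\Hom(T,\ker f)=0$ alone does not force $\ker f=0$. But your proposed remedy---``invoking \autoref{lem:exact_tilting} to force $\image(d_k)=\ker(d_{k-1})$ in $\ka$''---does not work as written. That lemma, like the definition of exact tilting, concerns \emph{surjections between objects of $\add(T)$}; the images and kernels of your differentials $d_k$ typically lie outside $\add(T)$, so no surjection inside $\add(T)$ ever appears for the lemma to act on. Nothing in the hypothesis tells you that a map in $\add(T)$ which becomes injective after $\Hom(T,-)$ was already injective in $\ka$. (The paper is admittedly terse on this half---``essentially surjective as all projective modules are in the image''---so you are right to flag it, but your plan does not supply the missing argument either.)
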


\begin{proof}
The functor $\Phi$ induces an equivalence between the abelian categories $\mmod{\Lambda}$ and
$\Phi\inv(\mmod{\Lambda})$.
Let $F\in\sod{T}\cap\ka$. We want to show that $\Phi(F)\in\mmod{\Lambda}$. As $F\in\sod{T}$ and $\Hom^i(T,T)=0$ for all $i\neq0$ ($T$ partial tilting), there is an isomorphism $F\cong D^\bullet$, where each component $D^i$ consists of summands from $T$.

By assumption, $D^\bullet$ has a single cohomology object $F$ in degree 0. We now show that $D^\bullet$ can be truncated at 0; thus without loss of generality $D^\bullet$ is a $T$-resolution of $F$. If $D^\bullet=[\cdots\to D^0\to D^1\to\cdots D^a]$ has components in positive degree, then we look at the two rightmost non-zero terms: these form a surjection $s\colon D^{a-1}\onto D^a$ and because $T$ is exact tilting, the induced map $\Hom(D^a,D^{a-1})\to\Hom(D^a,D^a)$ is also surjective. Hence we find a section of $s$ and can split off the subcomplex $D^a\to D^a$ as a direct summand of $D^\bullet$. Iterating this process leaves us with a complex ending in degree 0, hence a resolution of $F$.

Recall that $\Phi(T_i)=P_i$ are the indecomposable projective $\Lambda$-modules. Applying $\Phi$ to $D^\bullet$, we thus get a $P$-resolution of $\Phi(F)$, so that $\Phi(F)$ is a $\Lambda$-module. The resulting functor
 $\Phi \colon \sod{T}\cap\ka \to \mmod{\Lambda}$ is exact as a functor between abelian categories
(i.e.\ no derivation necessary) due to $T$ partial tilting:
  $\RRR\Hom(T,D^\bullet) = \Hom(T,D^\bullet)$.
It is essentially surjective as all projective modules are in the image:
$\Phi(T_i)=P_i$.
\end{proof}

\subsection{Universal extensions} \label{sub:universal_extensions}

Let $\ka$ be a ($\kk$-linear) abelian category with finite-dimensional $\Ext$ groups and $D^b(\ka)$ its bounded derived category, and assume we are given two objects $A,B\in\ka$. Following \cite{HillePerling}, we define the universal (co)extension of $B$ by $A$ by the short exact sequences
\[ \begin{array}{c @{\qquad} r}
 0 \to \Ext^1(A,B)^* \otimes B \to \univext{A}{B^r} \to A \to 0 , & \text{(extension)}   \\[1.0ex]
 0 \to B \to \univext{A^r}{B} \to \Ext^1(A,B) \otimes A \to 0   , & \text{(coextension)}
\end{array} \]
where $r\coloneqq\dim\Ext^1(A,B)$. Both extensions are given by the identity in
 $\End(\Ext^1(A,B)) = \Ext^1(A,B) \otimes \Ext^1(A,B)^*$,
using
\begin{align*}
 \Ext^1(A,B) \otimes \Ext^1(A,B)^* &= \Ext^1( A, \Ext^1(A,B)^*\otimes B) , \\
 \Ext^1(A,B) \otimes \Ext^1(A,B)^* &= \Ext^1( \Ext^1(A,B)\otimes A,B) .
\end{align*}
The notation for the extensions is unambiguous because of universality. The following observations are straightforward computations \cite{HillePerling}:

\begin{lemma} Let $A,B\in\ka$ and $\univext{A}{B^r}$ be their universal extension.
 If $\Ext^1(B,B)=0$, then $\Ext^1(\sunivext{A}{B^r},B)=0$.
 If $\Ext^1(A,A)=\Ext^1(B,A)=\Ext^1(B,B)=0$, then $\Ext^1(\sunivext{A}{B^r},\sunivext{A}{B^r})=0$.

If $(A,B)$ is an exceptional pair with $\Ext^{\geq2}(A,B)=0$, then $B\oplus\univext{A}{B^r}$ is partial tilting.
\end{lemma}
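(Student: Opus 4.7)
Abbreviate $E \coloneqq \univext{A}{B^r}$ and $V \coloneqq \Ext^1(A,B)$. The plan is to feed the defining short exact sequence
\[ 0 \to V^* \otimes B \to E \to A \to 0 \]
into various $\Hom$-functors, exploiting at the critical juncture that its extension class in $\Ext^1(A, V^* \otimes B) \cong V^* \otimes V$ is the identity $\id_V$.

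For the first assertion, apply $\Hom(-, B)$. Since $\Ext^1(V^* \otimes B, B) = V \otimes \Ext^1(B, B) = 0$ by hypothesis, it suffices to show that the connecting map $\delta\colon \Hom(V^* \otimes B, B) \to \Ext^1(A, B) = V$ is surjective. Under the identification $\Hom(V^* \otimes B, B) = V \otimes \End(B)$, a short computation unwinding $\id_V = \sum_i e_i^* \otimes e_i$ shows $\delta(v \otimes f) = f_*(v)$, where $f_*$ denotes postcomposition on $\Ext^1(A, B)$. Setting $f = \id_B$ already gives $\delta(v \otimes \id_B) = v$, so $\delta$ is surjective and $\Ext^1(E, B) = 0$. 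This is the main technical content: it is precisely where the word \emph{universal} does its work.

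For the second assertion, apply $\Hom(E, -)$ to the defining sequence. The first assertion gives $\Ext^1(E, V^* \otimes B) = V \otimes \Ext^1(E, B) = 0$, so the vanishing of $\Ext^1(E, E)$ reduces to $\Ext^1(E, A) = 0$. This in turn follows from $\Hom(-, A)$ applied to the defining sequence, since $\Ext^1(A, A) = 0$ and $\Ext^1(V^* \otimes B, A) = V \otimes \Ext^1(B, A) = 0$ by hypothesis.

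For the third assertion, an exceptional pair satisfies all the hypotheses of parts 1 and 2, so $\Ext^1(E, B) = \Ext^1(E, E) = 0$. The higher groups vanish by the same pattern: $\Hom(-, B)$ applied to the defining sequence combines $\Ext^{\geq 2}(A, B) = 0$ (the extra hypothesis) with $\Ext^{\geq 1}(B, B) = 0$ to give $\Ext^{\geq 2}(E, B) = 0$; $\Hom(-, A)$ gives $\Ext^{\geq 2}(E, A) = 0$ from the exceptional-pair vanishings; then $\Hom(E, -)$ yields $\Ext^{\geq 2}(E, E) = 0$. Finally, $\Hom(B, -)$ applied to the defining sequence gives $\Ext^{> 0}(B, E) = 0$ via $\Ext^{>0}(B, B) = \Ext^{>0}(B, A) = 0$. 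Altogether $\Ext^{>0}(B \oplus E, B \oplus E) = 0$, so $B \oplus E$ is partial tilting.
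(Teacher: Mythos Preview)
Your proof is correct. The paper itself does not prove this lemma; it merely labels the statements as ``straightforward computations'' and cites \cite{HillePerling}. Your argument supplies exactly those computations, and the key step --- that the connecting homomorphism $\delta\colon \Hom(V^*\otimes B,B)\to\Ext^1(A,B)$ is surjective because the extension class is $\id_V$ --- is the standard reason universal extensions work, so your approach is the expected one.

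One cosmetic slip: in the second assertion you write $\Ext^1(E, V^*\otimes B) = V\otimes\Ext^1(E,B)$; this should be $V^*\otimes\Ext^1(E,B)$. Since the group vanishes either way, the argument is unaffected.
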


An analogous statement holds for the coextensions, which leads to a partial tilting object $A\oplus\univext{A^r}{B}$ if $(A,B)$ is an exceptional pair with $\Ext^{\geq2}(A,B)=0$.

This process can be iterated to yield the following statement which combines Theorems~4.1 and 5.1 of \cite{HillePerling}. Note that this source provides a slightly more general statement: instead of considering an exceptional sequence of objects of the abelian category $\ka$, one can take them from the derived category $\Db(\ka)$, under the assumption that also negative extensions vanish. Since in our treatment all exceptional objects come from $\ka$, we restrict to $E_i\in\ka$ right away.

\begin{proposition}[\cite{HillePerling}] \label{prop:universal_extension}
Let $(E_1,\ldots,E_t)$ be an exceptional sequence in $\Db(\ka)$ such that $E_i\in\ka$ and $\Ext^{\geq2}(E_i,E_j)=0$ for all $i,j$. Then the object obtained from the sequence via iterated universal (co)extension is partial tilting.
%
%
\end{proposition}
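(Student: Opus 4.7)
I would prove this by induction on $t$, using the preceding lemma as the base engine. I describe the extension version only; the coextension variant is entirely dual.

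Base: For $t=1$, a single exceptional object is partial tilting by definition.

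Inductive step: Assume the result for length $t-1$, and apply it to the tail $(E_2,\ldots,E_t)$ to obtain a partial tilting object $T'' = F_2\oplus\cdots\oplus F_t$ (where each $F_j$ has been constructed by iterated universal extension). Next form the universal extension of the pair $(E_1,T'')$:
\[ 0 \to \Ext^1(E_1,T'')^* \otimes T'' \to F_1 \to E_1 \to 0, \]
and set $T \coloneqq F_1\oplus T''$. The goal is $\Ext^{>0}(T,T)=0$.

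Two auxiliary facts, obtained by dévissage along the filtration of $T''$ whose subquotients are $E_2,\ldots,E_t$, drive the computation: (i) $\Ext^{\geq 2}(E_1,T'')=0$, coming from the hypothesis $\Ext^{\geq 2}(E_1,E_j)=0$; and (ii) $\Ext^*(T'',E_1)=0$, coming from the exceptional vanishings $\Hom^*(E_j,E_1)=0$ for $j\geq 2$. Combined with $\Ext^{>0}(T'',T'')=0$ from the inductive hypothesis, the four long exact sequences associated to the defining short exact sequence of $F_1$ --- obtained by applying $\Hom(-,E_1)$, $\Hom(-,T'')$, $\Hom(T'',-)$, and $\Hom(F_1,-)$ --- successively yield $\Ext^{>0}(F_1,E_1)=0$, $\Ext^{>0}(F_1,T'')=0$, $\Ext^{>0}(T'',F_1)=0$, and finally $\Ext^{>0}(F_1,F_1)=0$. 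The only subtlety lies in the $\Ext^1$ part of the second vanishing, where the connecting map $\Hom(\Ext^1(E_1,T'')^*\otimes T'',T'') \to \Ext^1(E_1,T'')$ in the $\Hom(-,T'')$-sequence is exactly the universal extension class --- hence surjective by its very definition, killing $\Ext^1(F_1,T'')$.

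The main obstacle I anticipate is bookkeeping: there are several long exact sequences to track, and the order in which the vanishings are deduced matters (for instance, $\Ext^{>0}(F_1,T'')=0$ must be established before $\Ext^{>0}(F_1,F_1)=0$ can be read off the $\Hom(F_1,-)$-sequence). Conceptually, however, there is only one nontrivial ingredient --- the universality of the extension --- which is precisely the mechanism that upgrades the degree-wise hypothesis $\Ext^{\geq 2}(E_i,E_j)=0$ to the full partial-tilting condition $\Ext^{>0}(T,T)=0$.
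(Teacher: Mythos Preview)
The paper does not give its own proof of this proposition: it is quoted from \cite{HillePerling} (combining Theorems~4.1 and~5.1 there), and the preceding lemma on pairs is likewise only attributed to that source as a ``straightforward computation''. Your inductive argument is correct and is exactly the argument one would supply --- the only nontrivial point is the surjectivity of the connecting homomorphism $\Hom((T'')^r,T'')\to\Ext^1(E_1,T'')$, and you identify it correctly as the defining feature of the universal extension. So there is nothing to compare: you have written out the omitted proof.
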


Any exceptional sequence $(E_1,\ldots,E_t)$ gives rise to an equivalence between the triangulated subcategory it generates, $\sod{E_1,\ldots,E_t}$ and the derived category of the endomorphism \emph{dg} algebra of $\bigoplus E_i$; see \cite[Theorem~8.5(c)]{Keller}. However, under the assumptions of \autoref{prop:universal_extension}, via (co)extensions we can avoid the dg algebra and deal with a finite-dimensional algebra instead.

\subsection{Special exceptional sequences and exact tilting objects}
Let $\ka$ be an abelian category and $\ke = (E_1,\ldots,E_t)$ be an exceptional sequence in $\Db(\ka)$. By abuse of notation, we write $\ke$ rather than $\sod{\ke}$ for the triangulated category generated by the sequence.

We consider sequences with the following properties:

\bigskip\noindent
\[
{\ASS} \begin{cases}
  E_i\in\ka               \text{ for } i=1,\ldots,t, \\
  \Ext^{\geq2}(E_i,E_j)=0   \text{ for } i,j=1,\ldots,t, \\
  \dim\Hom(E_i,E_j) \leq 1 \text{ for } i\leq j, \\
  \text{all non-zero maps $E_i\to E_j$ are injective.}
\end{cases}
\]

\begin{proposition} \label{prop:getting_exact_tilt}
Let $(E_1,\ldots,E_t)$ be an exceptional sequence in $\Db(\ka)$ satisfying \ASS, and let $T$ be its universal extension. Then $T$ is an exact tilting object.
\end{proposition}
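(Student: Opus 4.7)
The plan is to verify the exact tilting property of \autoref{def:exact_tilting} via the reformulation in \autoref{lem:exact_tilting}: for each indecomposable summand $T_k$ of $T$ and each surjection $\phi \colon S \onto S'$ in $\add(T)$, the induced map $\Hom(T_k, S) \to \Hom(T_k, S')$ must be surjective. By \autoref{prop:universal_extension} we already know $T$ is partial tilting; its iterated universal extension construction produces indecomposable summands $T_i$ fitting into short exact sequences
\[ 0 \to K_i \to T_i \to E_i \to 0 \qquad\text{with}\qquad K_i \in \add(T_{i+1},\ldots,T_t). \]
Setting $K := \ker\phi$, the long exact sequence of $\Hom(T_k,-)$ combined with $\Ext^1(T_k, S) = 0$ (partial tilting) reduces the desired surjectivity to showing $\Ext^1(T_k, K) = 0$ in $\ka$.

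The first ingredient consists of two auxiliary vanishings. (a) $\Ext^{\geq 2}(T_i, T_j) = 0$ for all $i, j$, obtained by nested inductions using the short exact sequences for the $T_i$'s, propagating the hypothesis $\Ext^{\geq 2}(E_i, E_j) = 0$ from condition (ii) of \ASS. (b) $\Ext^1(T_k, E_j) = 0$ for all $k, j$, obtained by applying $\Hom(T_k, -)$ to the sequence for $T_j$: in
\[ \Ext^1(T_k, T_j) \to \Ext^1(T_k, E_j) \to \Ext^2(T_k, K_j) \]
the left term vanishes by partial tilting and the right by (a). By d\'evissage, (b) gives $\Ext^1(T_k, X) = 0$ for every $X \in \ka$ admitting a filtration with graded pieces among $\{E_1, \ldots, E_t\}$.

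The main step, and the principal obstacle, is to show that $K = \ker\phi$ itself admits such an $E$-filtration; d\'evissage then yields $\Ext^1(T_k, K) = 0$ and completes the proof via \autoref{lem:exact_tilting}. This is a closure-under-kernels property for the subcategory of $E$-filtered objects in $\ka$, analogous to the standard closure property for the category of $\Delta$-filtered modules over a quasi-hereditary algebra, and it is here that the full strength of \ASS, and particularly condition (iv), is used essentially. The strategy: $S$ carries a canonical filtration coming from its $T_i$-decomposition together with the short exact sequences above, whose successive graded pieces are sums of $E_j$'s; intersecting $K \subseteq S$ with this filtration, one argues that the resulting graded pieces of $K$ remain sums of $E_j$'s, not arbitrary subobjects. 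Exceptionality (giving $\Hom(E_\ell, E_j) = 0$ for $\ell > j$) combined with condition (iv) --- every non-zero $E_m \to E_j$ is injective, and, since distinct $E_i$'s are pairwise non-isomorphic by exceptionality, has image a proper copy of $E_m$ inside $E_j$ --- provides the rigidity needed to force the graded pieces of $K$ into the $E$-filtered subcategory. Without (iv), the graded pieces could be proper subobjects of some $E_j$ lying outside $\mathcal{F}(E)$, and the d\'evissage would fail.
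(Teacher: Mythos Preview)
Your reduction is clean and the auxiliary vanishings (a), (b) are correct; the difficulty, as you say, lies entirely in the ``main step'' that $K=\ker\phi$ is $E$-filtered. Here your sketch does not amount to a proof. You filter $S$ so that $\mathrm{gr}^k(S)\cong E_k^{m_k}$, and you note (correctly, via exceptionality) that $\phi$ is compatible with the filtrations; but compatibility is not strictness. The snake lemma gives only an inclusion
\[
\mathrm{gr}^k(K) \hookrightarrow \ker\bigl(\mathrm{gr}^k(\phi)\colon E_k^{m_k}\to E_k^{n_k}\bigr)
\]
with cokernel controlled by $\coker\bigl(\phi|_{F^{k+1}S}\bigr)$, and nothing forces this cokernel to vanish. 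So $\mathrm{gr}^k(K)$ is a priori an arbitrary subobject of $E_k^{a_k}$ in the ambient abelian category $\ka$ --- in $\Coh(X)$ this could be any torsion-free sheaf of the right rank --- and condition (iv), which only constrains morphisms $E_m\to E_j$, says nothing about such subobjects. The ``rigidity'' you invoke is exactly the missing ingredient, not a consequence of (iv). In fact your target statement ``$K$ is $E$-filtered'' is equivalent (via your own steps (a), (b) and \autoref{lem:exact_tilting}) to the splitting of $\phi$, so you have relocated the problem rather than solved it.

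The paper's argument avoids all of this by working on the other side of \autoref{lem:exact_tilting}: it shows directly that every non-isomorphism between indecomposable summands of $T$ has \emph{proper} image. For an exceptional pair $(A,B)$ with universal extension $E=\sunivext{A}{B^r}$, one has $\Hom(E,B)=\Hom(A,B)$, so the image of any $E\to B$ is the fixed copy of $A\subsetneq B$ (this is where (iii) and (iv) enter), while $\Hom(B,E)=\Ext^1(A,B)^*$ forces the image of any $B\to E$ into the subobject $B^r\subsetneq E$. Iterating, no non-identity map in $\add(T)$ is surjective; hence any surjection in $\add(T)$ is built from isomorphisms and splits. This image computation is what your approach lacks a substitute for.
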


\begin{proof}
We have to show that there are no non-splitting surjections in $T$. If $\hom(E_i,E_j)=1$ for all $i\leq j$, then the objects $E_i$ form a chain of unique inclusions $E_1\subsetneq E_2\subsetneq\ldots$, and all image objects inside $E_t$ are fixed. In general, they form blocks of trees of such chains, and images in each sink are fixed.

Now we check what happens when going to universal extensions: let $(A,B)$ be an exceptional pair of objects of $\ka$ with $\hom(A,B)=1$, $\ext^1(A,B)=r$ and $\ext^{\geq2}(A,B)=0$. The universal extension of the pair is $B\oplus E$ with $E\coloneqq\sunivext{A}{B^r}$, and from general theory we know
\begin{align*}
 \Hom(E,B) &=\Hom(A,B)     && \implies \image(E\to B) = A\subsetneq B , \\
 \Hom(B,E) &=\Ext^1(A,B)^* && \implies \image(B\to E) \subset \univext{0}{B^r} \subsetneq \univext{A}{B^r} = E.
\end{align*}
Therefore surjections in $T$ only come from identity maps, and hence induce surjections under $\Hom(T_i,\blank)$.
\end{proof}

\begin{proposition} \label{prop:abelian_characterisation}
Let $\ka$ be an abelian category and $\ke = (E_1,\ldots,E_t)$ be an exceptional sequence in $\Db(\ka)$ satisfying \ASS. Let $T$ be the universal extension of $\ke$. Then the following categories are 
equivalent:
\begin{enumerate}[label = (\arabic*)]
\item the additive category $\trke\cap\ka$;
\item the abelian subcategory of $\ka$ generated by $E_1,\ldots,E_t$;
\item the additive subcategory of $\ka$ of objects admitting a filtration with factors
      $E_1,E_1/E_2,\ldots,E_t/E_{t-1}$;
\item $\mmod{\End(T)}$.
\end{enumerate}
Moreover, $\trke \cong D^b(\trke\cap\ka)$ as triangulated categories.
\end{proposition}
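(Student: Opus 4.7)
The plan is to deduce the four equivalences via a chain of inclusions, and then to derive the triangulated statement from classical tilting. The core input is that $T$ is exact tilting by \autoref{prop:getting_exact_tilt}, so \autoref{prop:exact_tilt} already yields an equivalence of abelian categories $\Phi = \Hom(T,\blank) \colon \trke \cap \ka \isom \mmod{\End(T)}$, establishing $(1) \cong (4)$. In particular $\trke \cap \ka$ is abelian with kernels and cokernels inherited from $\ka$, and because $\trke \subseteq \Db(\ka)$ is a full triangulated subcategory, $\Ext^1_\ka$ between objects of $(1)$ agrees with $\Ext^1_{\mmod{\End(T)}}$ of their $\Phi$-images; hence $(1)$ is closed under extensions in $\ka$.

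With $(1)$ thus controlled, the easy inclusions are immediate. $(2) \subseteq (1)$ because $(1)$ is an abelian subcategory of $\ka$ containing every $E_i$, and $(3) \subseteq (1)$ because $(1)$ contains each quotient $E_j/E_{j-1}$ as a cokernel and is extension-closed. The reverse $(3) \subseteq (2)$ is equally direct once $(2)$ is read as an extension-closed abelian subcategory, which is the convention implicit in ``generated by'': the $E_j/E_{j-1}$ are cokernels of the canonical injections from \ASS, hence in $(2)$, and iterated extensions build all of $(3)$.

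The main step is $(1) \subseteq (3)$. Under $\Phi$, an object $F \in (1)$ corresponds to a finite-dimensional $\End(T)$-module, which admits a composition series with simple factors $S_1, \ldots, S_t$. Pulling back along $\Phi\inv$ yields a filtration of $F$ in $\ka$, so it suffices to identify the simples as $\Phi\inv(S_j) = E_j/E_{j-1}$ (with the convention $E_0 \coloneqq 0$). For this one shows $\dim \Hom(T_i, E_j/E_{j-1}) = \delta_{ij}$: apply $\Hom(T_i, \blank)$ to the sequence $0 \to E_{j-1} \to E_j \to E_j/E_{j-1} \to 0$ and exploit the $E$-filtration on each summand $T_i$ furnished by the iterated universal extension construction. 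The one-dimensional Hom conditions of \ASS, the injectivity of all non-zero maps among the $E_k$, and the vanishing $\Ext^{\geq 2}(E_k, E_l) = 0$ together force the claimed $\delta_{ij}$-pattern, essentially by iterating the two-term calculation exhibited in the proof of \autoref{prop:getting_exact_tilt}.

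The triangulated equivalence $\trke \cong \Db(\trke \cap \ka)$ is then formal: classical tilting provides $\trke \isom \Db(\mmod{\End(T)})$, and substituting the established $\mmod{\End(T)} \cong \trke \cap \ka$ yields the claim. The main obstacle is the simple-identification step; although the Hom bookkeeping is rigidly constrained by \ASS, one must track it through all summands $T_i$ produced by the universal extension algorithm, which for a non-linear (forest-shaped) chain of $E_i$ requires some care with indexing conventions.
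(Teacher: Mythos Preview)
Your proof is correct and follows essentially the same strategy as the paper: both establish $(1)\cong(4)$ via \autoref{prop:exact_tilt} and \autoref{prop:getting_exact_tilt}, then identify the simple $\Lambda$-modules with the successive quotients $E_j/E_{j-1}$ to obtain the filtration description $(3)$. Your organization differs slightly---you prove $(1)\subseteq(3)$ directly via pulling back composition series, whereas the paper first argues $(1)=(2)$ by a closure argument and then $(2)=(3)$---but the substantive content, including the level of detail on the simples identification, is the same.
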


\begin{corollary}
In particular, $\trke\cap\ka$ is an abelian category and has the object $T$ as a projective generator.
\end{corollary}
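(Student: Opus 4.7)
My plan is to derive both assertions directly from \autoref{prop:abelian_characterisation}, whose $(1)\Leftrightarrow(4)$ equivalence identifies $\trke\cap\ka$ with $\mmod{\End(T)}$ via $\Hom(T,-)$. Since $\mmod{\End(T)}$ is an abelian category and equivalences of additive categories transport the abelian structure, the category $\trke\cap\ka$ inherits an abelian structure from $\mmod{\End(T)}$. This takes care of the first assertion with essentially no extra work.

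For the projective generator claim, I would track where the object $T$ itself is sent by the equivalence. Under $\Hom(T,-)$ the object $T$ maps to $\End(T)=\Lambda$, viewed as the regular left $\Lambda$-module. The regular module is the canonical projective generator of $\mmod{\Lambda}$: it is projective by construction, and every finitely generated module is a quotient of some $\Lambda^n$. Equivalences of abelian categories preserve both projectivity and the generator property, so $T$ pulls back to a projective generator of $\trke\cap\ka$. One minor check is that $T$ does lie in $\trke\cap\ka$; but this is immediate from the setup, since each $E_i\in\ka$ by \ASS, universal extensions of objects of $\ka$ stay in $\ka$, and $T\in\sod{\ke}=\trke$ by definition.

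I do not anticipate a real obstacle for the corollary itself --- its content is an unpacking of \autoref{prop:abelian_characterisation}. The only bookkeeping point is to confirm that the equivalence in that proposition coincides with the one of \autoref{prop:exact_tilt}, i.e.\ is realized by $\Hom(T,-)$. This is built into the setup, since \autoref{prop:exact_tilt} already supplies $\Hom(T,-)\colon \sod{T}\cap\ka \isom \mmod{\Lambda}$, and the substance of \autoref{prop:abelian_characterisation} is precisely the identification $\sod{T}\cap\ka = \trke\cap\ka$ (the universal extension $T$ generates the same triangulated subcategory as the sequence $\ke$).
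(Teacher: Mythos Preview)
Your proposal is correct and matches the paper's intent: the corollary is stated without proof as an immediate consequence of \autoref{prop:abelian_characterisation}, and your argument is precisely the natural unpacking of why it follows---the equivalence with $\mmod{\Lambda}$ transports the abelian structure, and $T\mapsto\Lambda$ under $\Hom(T,-)$ identifies $T$ with the regular module, which is the canonical projective generator.
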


\begin{corollary} \label{cor:global_dimension}
The global dimensions of $\ka$, its subcategory $\trke\cap\ka$ and the algebra $\End(T)$ satisfy
\[ \gldim{\End(T)} = \gldim{\trke\cap\ka} \leq \gldim{\ka} . \]
\end{corollary}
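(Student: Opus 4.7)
The plan is to deduce both parts of the statement directly from \autoref{prop:abelian_characterisation}. For the equality $\gldim \End(T) = \gldim(\trke \cap \ka)$, I would invoke part (4) of that proposition, which provides an equivalence of abelian categories $\trke \cap \ka \isom \mmod{\End(T)}$. Equivalences of abelian categories preserve $\Ext$ groups, hence preserve global dimensions; together with the standard identification $\gldim(\mmod \Lambda) = \gldim \Lambda$ for a finite-dimensional algebra $\Lambda$, this yields the equality.

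For the inequality $\gldim(\trke \cap \ka) \leq \gldim \ka$, I would use the final clause of \autoref{prop:abelian_characterisation}, namely $\trke \cong D^b(\trke \cap \ka)$, combined with the fact that $\trke$ is by construction a full triangulated subcategory of $D^b(\ka)$. The key computation, for $M, N \in \trke \cap \ka$ and $i \geq 0$, reads
\[
\Ext^i_{\trke \cap \ka}(M, N) = \Hom_{\trke}(M, N[i]) = \Hom_{D^b(\ka)}(M, N[i]) = \Ext^i_{\ka}(M, N),
\]
where the first equality uses $\trke \cong D^b(\trke \cap \ka)$ and the second uses full faithfulness of the embedding $\trke \hookrightarrow D^b(\ka)$. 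Vanishing of $\Ext^i_{\ka}$ for $i > \gldim \ka$ therefore forces vanishing of $\Ext^i_{\trke \cap \ka}$ in the same range, giving the desired bound.

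No real obstacle is expected: both statements are formal once \autoref{prop:abelian_characterisation} is in hand. The only point requiring care is to justify the middle equality in the display above, which is exactly the full faithfulness built into the triangulated subcategory $\trke \subset D^b(\ka)$; without this identification one might conflate extensions computed internally to $\trke \cap \ka$ with those computed in $\ka$ and miss that they coincide on the nose.
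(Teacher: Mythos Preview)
Your proposal is correct and matches the paper's proof essentially line for line: the paper derives the inequality from the same chain of identifications $\Ext^i_{\trke\cap\ka}(-,-) = \Hom_{\trke}(-,-[i]) = \Hom_{\Db(\ka)}(-,-[i]) = \Ext^i_\ka(-,-)$, citing $\trke\cong\Db(\trke\cap\ka)$ for the first step and fullness of $\trke\subseteq\Db(\ka)$ for the second, and obtains the equality from the equivalence $\trke\cap\ka\cong\mmod{\End(T)}$ of \autoref{prop:abelian_characterisation}. The only difference is the order in which you treat the two parts.
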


\begin{proof}[Proof of \autoref{cor:global_dimension}]
Comparing Ext groups in the two categories,
\[ \Ext^i_{\trke\cap\ka}(-,-) = \Hom_{\trke}(-,-[i]) = \Hom_{\Db(\ka)}(-,-[i]) = \Ext^i_\ka(-,-) ,\]
shows $\gldim{\trke\cap\ka} \leq \gldim{\ka}$, where the first equality uses $\Db(\trke\cap\ka) = \trke$, and the second relies on $\trke\subseteq\Db(\ka)$ being a full, triangulated subcategory.
The equality of the corollary follows from the equivalence $\mmod{\End(T)}\cong\trke\cap\ka$ of \autoref{prop:abelian_characterisation}.
\end{proof}

\begin{proof}
Write $\kc_{(1)},\kc_{(2)},\kc_{(3)},\kc_{(4)}$ for the four categories of the theorem. We know from \autoref{prop:exact_tilt} that $\kc_{(1)}$ is an abelian category. Obviously, both $\kc_{(1)}$ and $\kc_{(2)}$ contain $E_1,\ldots,E_t$, hence $\kc_{(2)}\subseteq\kc_{(1)}$. On the other hand, $\kc_{(1)}$ is closed under kernels, cokernels and direct sums (all of these are special cases of distinguished triangles), so that $\kc_{(1)}\subseteq\kc_{(2)}$.

For the equivalence of $\kc_{(2)}\cong\kc_{(3)}$, we note that any module over a finite-dimensional algebra has a filtration by simple modules. The statement of (3) is that the objects $E_1,E_2/E_1,\ldots,E_t/E_{t-1}$ are the simples of the abelian category $\kc_{(2)}\cong\mmod{\Lambda}$.

We get $\kc_{(1)}\cong\kc_{(4)}$ from \autoref{prop:exact_tilt} and \autoref{prop:getting_exact_tilt}.

The final statement follows from tilting theory, by \autoref{prop:exact_tilt} we have a commutative diagram whose horizontal arrows are equivalences:
\[ \xymatrix{
\trke        \ar[r]^-\Phi                & \Db(\mmod{\Lambda}) \\
\trke\cap\ka \ar[r]^-\Phi \ar@{^{(}->}[u] & \mmod{\Lambda} \ar@{^{(}->}[u]
 } \]
Hence $\trke \cong \Db(\mmod{\Lambda}) \cong \Db(\trke\cap\ka)$, as claimed.
\end{proof}

\begin{example}
Both propositions fail if the condition $\hom(E_i,E_j)=1$ is removed from \ASS: the full and strong exceptional sequence $(\ko,\ko(1))$ on $\Db(\IP^1)$ satisfies $\ko,\ko(1)\in\Coh(\IP^1)$ and all non-zero morphisms $\ko\to\ko(1)$ are injective. However, the universal extension is just the direct sum $\ko\oplus\ko(1)$, and this bundle is not exact tilting, due to the non-split surjection $\ko^2\onto\ko(1)$.
\end{example}

\section{Chains of negative curves} \label{sec:chains}

\noindent
Let $X$ be a smooth, projective surface. In order to apply the theory of exceptional sequences and tilting, we assume that line bundles on $X$ are exceptional. This property is equivalent to $q(X)=p_g(X)=0$, i.e.\ vanishing irregularity ($q(X)=h^1(\ko_X)=0$) and vanishing geometric genus ($p_g(X)=h^0(\omega_X)=h^2(\ko_X)^*=0$). It holds for rational, e.g.\ toric surfaces, but in fact, in any Kodaira dimension there are surfaces with $q=p_g=0$; see \cite[\S VII.11, VIII.15]{BHPV}. Throughout, we assume:

\medskip\noindent
\emph{$X$ denotes a smooth, projective surface such that $\ko_X$ is exceptional.}
\medskip

From now on we fix a chain $(C_1,\dots,C_t)$ of type $A$ of smooth, rational curves, i.e.\ the curves are pairwise disjoint except that $C_i$ and $C_{i+1}$ intersect transversally in a single point, for $i=1,\ldots,t-1$.

We consider the sequence of line bundles
\[ \ke = (E_0,\dots,E_t) \coloneqq (\ko(-C_1-\dots-C_t),\dots,\ko(-C_1),\ko) . \]

\begin{lemma} \label{lem:geometric_assumption}
The sequence of line bundles $\ke$ is an exceptional sequence.

If $C_i^2\leq-2$ for all $i$, then this sequence satisfies assumption \ASS.
\end{lemma}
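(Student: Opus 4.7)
Plan. The plan is to reduce everything to computations of $H^*(X, \ko_X(\pm D))$, where $D = C_a + \cdots + C_b$ is a consecutive subchain of $(C_1, \ldots, C_t)$. Two short exact sequences carry the argument: the ideal sequence
\[ 0 \to \ko_X(-D) \to \ko_X \to \ko_D \to 0 \]
and its twist
\[ 0 \to \ko_X \to \ko_X(D) \to \ko_D(D) \to 0 . \]
The crucial input is that such a $D$ is a reduced curve of arithmetic genus zero (an $A$-type chain of $\IP^1$'s), so $H^0(D, \ko_D) = \kk$ and $H^1(D, \ko_D) = 0$; combined with $H^{>0}(X, \ko_X) = 0$ from exceptionality of $\ko_X$, this controls all of the cohomology at issue.

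For the first assertion, each $E_j$ is a line bundle, so $\Ext^*(E_j, E_j) = H^*(X, \ko_X)$ gives exceptionality of each summand. For $i < j$ in the ordering $(E_0, \ldots, E_t)$, the sheaf $E_j^{-1} \otimes E_i$ equals $\ko_X(-D)$ for a consecutive subchain $D$. The long exact sequence coming from the ideal sequence, combined with the fact that the restriction $H^0(X, \ko_X) \to H^0(D, \ko_D)$ is an isomorphism of $1$-dimensional spaces (both are $\kk$, and the constant $1$ maps to the constant $1$), yields $H^*(X, \ko_X(-D)) = 0$, hence $\Ext^*(E_j, E_i) = 0$.

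Now assume $C_i^2 \leq -2$ for all $i$. Conditions (1) and (4) of \ASS\ are immediate: line bundles are coherent, and every non-zero morphism between line bundles on the (connected, hence irreducible) surface $X$ is injective. For (2), the case $i > j$ is covered by the exceptional sequence, while the case $i \leq j$ reduces to $H^{\geq 2}(X, \ko_X(D)) = 0$; this drops out of the twisted SES, since $H^{\geq 2}(X, \ko_X) = 0$ and $H^2(D, \ko_D(D)) = 0$ by dimension of $D$.

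The substantive step is condition (3): I claim $\dim H^0(X, \ko_X(D)) = 1$ for every consecutive subchain $D$, and I would argue by induction on the length. For $D = C_a$, the sheaf $\ko_{C_a}(C_a)$ on $C_a \cong \IP^1$ has degree $C_a^2 \leq -2$, so $H^0 = 0$, and the twisted SES gives $H^0(X, \ko_X(C_a)) = H^0(X, \ko_X) = \kk$. For the inductive step, let $D = D' + C_b$ with $D'$ the subchain of length one less obtained by removing the endpoint $C_b$. The SES
\[ 0 \to \ko_X(D') \to \ko_X(D) \to \ko_{C_b}(D) \to 0 \]
has right-hand term of degree $D \cdot C_b = C_b^2 + 1 \leq -1$ on $C_b \cong \IP^1$, so $H^0(\ko_{C_b}(D)) = 0$ and $\dim H^0(X, \ko_X(D)) = \dim H^0(X, \ko_X(D')) = 1$ by induction. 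The hypothesis $C_i^2 \leq -2$ enters exactly here, keeping the degree of each successive restriction strictly negative; this inductive degree bookkeeping is the main (and essentially the only) point requiring care.
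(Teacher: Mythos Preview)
Your proof is correct and follows essentially the same route as the paper's: both reduce to computing $H^*(\ko_X(\pm D))$ for consecutive subchains $D$ via the ideal sequence $0\to\ko(-D)\to\ko\to\ko_D\to0$ and its twist, use $H^0(\ko_D)=\kk$, $H^1(\ko_D)=0$ together with exceptionality of $\ko_X$ to get $H^*(\ko(-D))=0$, and then prove $\dim H^0(\ko(D))=1$ by the same induction adding one curve at a time with degree $C_b^2+1<0$. Your write-up is in fact slightly more explicit than the paper's (e.g.\ you spell out why the restriction $H^0(\ko_X)\to H^0(\ko_D)$ is an isomorphism); the only cosmetic slip is the parenthetical ``connected, hence irreducible'' --- the implication holds here because $X$ is smooth, not merely connected.
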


\begin{proof}
By our standing assumption that line bundles on $X$ are exceptional, all $E_i$ are exceptional sheaves. Furthermore, for a subchain $D$ of $C_1\cup\dots\cup C_t$, the long cohomology sequence for the short exact sequence $0\to\ko(-D)\to\ko\to\ko_D\to0$ implies $H^*(\ko(-D))=0$. Here we use that all components of $D$ are rational, that $D$ is reduced and connected (hence $H^0(\ko_D)=\kk$), and that $\ko$ is exceptional. For any $i>j$, we have $\Ext^k(E_i,E_j)=H^k(\ko(-D))$ for a divisor $D$ of that type. Hence the sequence is exceptional.

As $\ke$ is a chain of line bundles, all non-zero maps $E_i\to E_j$ are inclusions. The sequence has vanishing $\Ext^2$ for general reasons: for any $i\leq j$, we have $\Ext^2(E_i,E_j)=H^2(\ko_D)$ for a subchain $D$ as above. The short exact sequence $0\to\ko\to\ko(D)\to\ko_D(D)\to0$ induces $H^2(\ko(D))=0$, using that $\ko$ is exceptional and that $\ko_D(D)$ has 1-dimensional support.

We proceed to check $\dim\Hom(E_i,E_j)=1$ for $i\leq j$. This is the place where we use the assumption $C_i^2\leq-2$.
Note that $\Hom(E_i,E_j)=H^0(\ko(D))$ for a subchain $D$ of $C_1\cup\dots\cup C_t$.
If $D$ is irreducible, i.e.\ $j=i+1$, then we get $H^0(\ko(D))=\kk$ from the cohomology sequence of $0\to\ko\to\ko(D)\to\ko_D(D)\to0$ using $\ko_D(D)=\ko_{\IP^1}(m)$ with $m=D^2<0$.
Now, by induction, assume that we know $H^0(\ko(D))=\kk$ for some chain and let $C$ be a curve meeting $D$. We consider the short exact sequence
\[ 0 \to \ko(D) \to \ko(D+C) \to \ko_C(C+D) \to 0 . \]
Since $C$ meets precisely one component of $D$, we have $H^0(\ko_C(C+D) = H^0(\ko_{\IP^1}(C^2+1)) = 0$, using
$C^2+1<0$. Taking global sections thus gives $H^0(\ko(D+C)) = H^0(\ko(D)) = \kk$ by induction.
\end{proof}

\begin{remark}
The proof shows a little more: if just one curve has self-intersection $-1$ and all others satisfy $C_i^2\leq-2$, then \ASS\ still holds.

Even more than one $(-1)$-curve can be supported in certain cases. For instance, it can be checked that a $(-1)(-3)(-1)$-chain satisfies \ASS, but a $(-1)(-2)(-1)$-chain does not. Note that the former chain contracts to a $(-1)$-curve, whereas the latter contracts to a 0-curve.
\end{remark}

\begin{remark}
We now consider the case of two $(-1)$-curves and show that condition \ASS\ fails:
if $C_1$ and $C_2$ are $(-1)$-curves intersecting in a point, then $(\ko(-C_1-C_2),\ko(-C_1),\ko)$ is a strong exceptional sequence with $\dim\Hom(\ko(-C_1-C_2),\ko)=2$.

Blowing down $C_1$ yields $\pi\colon X\to Y$ with a smooth, rational curve $F\subset Y$ such that $F^2=0$. Assume that $F$ is the fibre of a morphism $p\colon Y\to\IP^1$, e.g.\ if $X$ is a Hirzebruch surface. Hence $\ko(C_1+C_2) = \pi^*\ko(F) = \pi^*p^*\ko_{\IP^1}(z)$ for a point $z$ on $\IP^1$. Pulling back the surjection $\ko_{\IP^1}^2\onto\ko_{\IP^1}(z)$ gives $\ko_X^2\onto\ko_X(C_1+C_2)$. Hence the partial tilting bundle
 $T = \ko(-C_1-C_2) \oplus \ko(-C_1) \oplus \ko$ is not exact.

Likewise, it can be shown that the functor $\Hom(T,\blank)$ does not induce an equivalence of abelian categories.
\end{remark}

The exceptional sequence $\ke = (\ko(-C_1-\dots-C_t),\dots,\ko(-C_1),\ko)$ is strong precisely when all $C_i^2\geq-1$. By contrast, we are interested in the case $C_i^2<-1$. One motivation for studying the triangulated category $\trke$ generated by such line bundles is that it contains the torsion sheaves $\ko_{C_1},\ko_{C_2}(-1),\ldots,\ko_{C_t}(-1)$. These are of particular interest when all $C_i^2=-2$, for in that case they form an $A_t$-chain of spherical sheaves and thus give a braid group action on $\Db(X)$. In \cite{IU}, the full subcategory $\Db_{C}(X)$ of $\Db(X)$ of objects supported on $C \coloneqq C_1\cup\ldots\cup C_t$ is studied. The category $\trke$ of this article contains some of the spherical sheaves (one for every irreducible component) but has the advantage of being generated by an exceptional sequence. This allows access to methods from representation theory. We think of $\trke$ as a categorical neighbourhood of the triangulated category generated by $\ko_{C_1},\ko_{C_2}(-1),\ldots,\ko_{C_t}(-1)$.

\begin{example} \label{ex:exact_tilting}
We take up the example from the introduction.
Let $\kp$ and $\ki$ be the partial tilting bundles obtained from universal extension and
coextension, respectively. For $t=1$, these are
\[ \kp = \ko \oplus \univext{\ko(-C)}{\ko^r} , \text{ and }
   \ki = \ko(-C) \oplus \univext{\ko(-C)^r}{\ko} . \]
Of these, $\kp$ is exact tilting but $\ki$ is not --- observe that $\ki$ contains the non-splitting surjections $\sunivext{\ko(-C)^r}{\ko}\onto\ko(-C)$. For $r=1$, i.e.\ a single $(-2)$-curve, the endomorphism algebras are the same: $\End(\kp)=\End(\ki)$.

The object $\kp$ is a projective generator, but $\ki$ is in general not an injective cogenerator (it is an injective cogenerator for the category of $\Delta$-modules).
\end{example}

Next, we spell out what \autoref{lem:geometric_assumption} implies in view of Propositions~\ref{prop:exact_tilt}, \ref{prop:getting_exact_tilt}, \ref{prop:abelian_characterisation}. Note that the sheaves in (3) below are the minimal line bundle and the torsion sheaves supported on the irreducible components of the chain. Therefore, these are the simple objects of the abelian category $\CohE$. Also note that the structure sheaf $\ko$, i.e.\ the maximal line bundle of the sequence is the consecutive extension of these torsion sheaves by $\ko(-C_1-\dots-C_t)$.

\begin{theorem} \label{thm:abelian_category_geometry}
Let $C_1,\ldots,C_t$ be an $A_t$-chain of curves on $X$ such that $C_i\cong\IP^1$ and $C_i^2\leq-2$ for all $i$, let $T$ be the universal extension of the sequence
\[ \ke = (\ko(-C_1-\dots-C_t),\dots,\ko(-C_1),\ko) \]
and let $\Lambda = \End(T)$ be the endomorphism algebra.
Then $T$ is an exact tilting object and induces an equivalence of abelian categories
\[ Hom(T,\blank) \colon \CohE \isom \mmod{\Lambda} . \]
Furthermore, the following categories are equivalent to each other:
\begin{enumerate}[label = (\arabic*)]
\item the additive category $\CohE \coloneqq \trke\cap\Coh(X)$;
\item the abelian subcategory of $\Coh(X)$ generated by the line bundles
      $\ko(-C_1\dots-C_t),\dots,\ko(-C_1),\ko$;
\item the additive category of coherent sheaves admitting a filtration with factors
      $\ko_X(-C_1\cdots-C_t),\ko_{C_t}(-1),\ldots,\ko_{C_2}(-1),\ko_{C_1}$.
\end{enumerate}
Moreover, $\trke \cong \Db(\CohE)$ as triangulated categories and the algebra $\Lambda$ is quasi-hereditary.
\end{theorem}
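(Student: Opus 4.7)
The plan is to deduce most of the theorem directly from the abstract results of Section~1, applied via \autoref{lem:geometric_assumption}. Beyond routine assembly, two items require actual work: identifying the abstract filtration factors from \autoref{prop:abelian_characterisation} with the listed torsion sheaves, and establishing quasi-heredity of $\Lambda$.

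First, I would invoke \autoref{lem:geometric_assumption} to see that the hypotheses $C_i\cong\IP^1$ and $C_i^2 \leq -2$ imply $\ke$ satisfies \ASS. Then \autoref{prop:getting_exact_tilt} yields that $T$ is exact tilting, \autoref{prop:exact_tilt} provides the equivalence $\Hom(T,\blank) \colon \CohE \isom \mmod{\Lambda}$, and \autoref{prop:abelian_characterisation} delivers the equivalences between (1), (2), its abstract version of (3), and $\mmod{\Lambda}$, together with the triangulated equivalence $\trke \cong \Db(\CohE)$.

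To match (3) of the theorem with the abstract form in \autoref{prop:abelian_characterisation}, I would identify each quotient $E_{i+1}/E_i$ concretely. Tensoring the ideal sequence $0 \to \ko(-C_{t-i}) \to \ko \to \ko_{C_{t-i}} \to 0$ with $\ko(-C_1 - \cdots - C_{t-i-1})$ yields
\[
0 \to E_i \to E_{i+1} \to \ko_{C_{t-i}}(-C_1 - \cdots - C_{t-i-1}) \to 0 .
\]
Since in an $A_t$-chain $C_{t-i}$ meets only $C_{t-i-1}$, and then in a single transverse point, restricting gives $\ko_{C_{t-i}}(-1)$ for $0 \leq i \leq t-2$, and $\ko_{C_1}$ for $i = t-1$ (empty sum). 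Combined with $E_0 = \ko(-C_1 - \cdots - C_t)$, this is exactly the list in (3).

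For quasi-heredity of $\Lambda$, I would exhibit standard modules $\Delta_i \coloneqq \Hom(T, E_i)$ ordered by the exceptional sequence. The combination of the exceptional sequence property (all Hom and Ext from $E_j$ to $E_i$ vanish for $j > i$) and the condition $\dim\Hom(E_i, E_j) \leq 1$ from \ASS\ forces each $\Delta_i$ to have simple top $S_i$ with remaining composition factors from $\{S_0, \ldots, S_{i-1}\}$; and the iterated universal-extension construction equips each indecomposable projective $P_i = \Hom(T, T_i)$ with a $\Delta$-filtration by construction. The main obstacle I anticipate is careful bookkeeping of these $\Delta$-filtrations of the $P_i$, tracking the order in which the universal extensions are performed, rather than any serious conceptual difficulty.
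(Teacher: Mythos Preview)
Your assembly of the main claims from the Section~1 machinery matches the paper's proof exactly, including the identification of the successive quotients $E_{i+1}/E_i$ with the torsion sheaves listed in (3).

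The one genuine divergence is the argument for quasi-heredity of $\Lambda$. The paper's route is much shorter: since $X$ is a smooth surface, $\Coh(X)$ has global dimension $2$; by \autoref{cor:global_dimension} the same holds for $\CohE \cong \mmod{\Lambda}$; and a theorem of Dlab--Ringel \cite{Dlab-Ringel0} states that any finite-dimensional algebra of global dimension at most $2$ is quasi-hereditary. Your direct verification---exhibiting $\Delta_i = \Hom(T,E_i)$ as standard modules and checking $\Delta$-filtrations of the indecomposable projectives---is also viable, and indeed this is essentially the general content of \cite{HillePerling}, where universal-extension algebras of exceptional sequences with $\Ext^{\geq 2}=0$ are shown to be quasi-hereditary with standards corresponding to the exceptional objects. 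Your approach has the advantage of identifying the standard modules explicitly, whereas the paper's avoids all the bookkeeping with a one-line citation. One small remark: the condition $\dim\Hom(E_i,E_j)\leq 1$ from \ASS\ is not what is needed for quasi-heredity (it is what makes $T$ \emph{exact} tilting); the quasi-hereditary structure comes purely from the exceptional-sequence axioms together with the vanishing of $\Ext^{\geq 2}$.
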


\begin{proof}
By \autoref{lem:geometric_assumption}, the sequence of line bundles $\ke$ is an exceptional sequence satisfying the condition \ASS. Hence by \autoref{prop:getting_exact_tilt}, the universal extension $T$ of $\ke$ is an exact tilting object and we get the equivalence of abelian categories
 $\RRR\Hom(T,\blank) \colon \CohE \isom \mmod{\Lambda}$
from \autoref{prop:exact_tilt}.

The properties of the abelian category follow from \autoref{prop:abelian_characterisation}, using that the inclusion $\ko(-C_1\ldots-C_{i-1})\into\ko(-C_1-\ldots-C_i)$ has cokernel $\ko_{C_i}(-1)$, for $i>1$.

As $X$ is a smooth surface, $\Coh(X)$ has global dimension 2. Therefore, the category $\CohE$ also has global dimension 2, and hence so has the algebra $\Lambda$. It is a general fact that this already implies $\Lambda$ quasi-hereditary \cite[Theorem~2]{Dlab-Ringel0}.
\end{proof}

\section{First properties of the abelian category $\CohE$}

\begin{lemma} 
\makeatletter
\hyper@anchor{\@currentHref}
\makeatother
\label{lem:sheaf_properties}
\begin{enumerate}
\item A torsion free sheaf in $\CohE$ is locally free. 
\item $\CohE$ is closed under taking torsion subsheaves.
\item $F\in\CohE$ is locally free $\iff$ $\Ext^2(F,\blank)=0$ on $\CohE$.
\item The support of a non-zero object in $\trke$ is either $X$ or a union of curves $C_i$.
\end{enumerate}
\end{lemma}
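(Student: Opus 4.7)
My plan is to prove the four parts in the order (4), (2), (3), (1), each building on the earlier ones.

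For (4), I use the filtration characterization from \autoref{thm:abelian_category_geometry}(3): every $F\in\CohE$ admits a filtration with factors in $\{\ko_X(-C_1-\cdots-C_t),\ko_{C_t}(-1),\ldots,\ko_{C_1}\}$. Since support is additive along short exact sequences, $\supp F$ equals the union of the supports of its filtration factors, which is $X$ if the line bundle simple appears and otherwise a union of some $C_i$'s. For a general non-zero $M\in\trke\cong\Db(\CohE)$, each cohomology sheaf $H^i(M)$ lies in $\CohE$ and $\supp M=\bigcup_i\supp H^i(M)$, reducing to the case just handled.

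For (2), I induct on the composition length of $F\in\CohE$, exploiting left-exactness of the sheaf-theoretic torsion functor. For a short exact sequence $0\to F'\to F\to S\to 0$ in $\CohE$ with $S$ simple and $F'$ of smaller length, left-exactness yields an inclusion $T(F)/T(F')\hookrightarrow T(S)\in\{0,S\}$ (depending on whether $S$ is locally free or torsion). By induction $T(F')\in\CohE$, and $T(F)$ is thus an extension in $\Coh(X)$ of an object of $\CohE$ by $T(F')$; since $\CohE$ is extension-closed in $\Coh(X)$ (the $\Ext^1$ groups agree by \autoref{cor:global_dimension}), $T(F)\in\CohE$.

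For (3), direction $(\Leftarrow)$: assume $\Ext^2(F,\blank)=0$ on $\CohE$ and suppose $G\coloneqq T(F)\neq 0$. By (2), $G\in\CohE$; let $S\subseteq G$ be a torsion simple subobject. Applying the long exact $\Ext$-sequence first to $0\to G\to F\to F/G\to 0$ and then to $0\to S\to G\to G/S\to 0$, and using $\gldim\CohE\leq 2$, one obtains successive surjections $\Ext^2(F,S)\twoheadrightarrow\Ext^2(G,S)\twoheadrightarrow\Ext^2(S,S)$, forcing $\Ext^2(S,S)=0$. But a local-to-global computation using the normal bundle gives $\Ext^2(S,S)\supseteq H^1(\IP^1,\ko_{\IP^1}(C_i^2))\neq 0$ since $C_i^2\leq-2$, a contradiction; hence $F$ is torsion-free, and (1) upgrades this to locally free. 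Direction $(\Rightarrow)$ uses Serre duality on $X$: $\Ext^2(F,G)\cong\Hom(G,F\otimes\omega_X)^*$. Filter $G\in\CohE$ by simples, reducing to $\Hom(S_j,F\otimes\omega_X)=0$; for torsion simples this vanishes since $F\otimes\omega_X$ is locally free; for $S_0=\ko_X(-C_1-\cdots-C_t)$ it equals $H^0(F\otimes\omega_X(C_1+\cdots+C_t))$, which vanishes after filtering $F$ by simples and using $H^0(\omega_X)=0$ (from $\Ext^2(\ko_X,\ko_X)=0$ via \ASS) together with direct vanishing of $H^0$ on the torsion-simple contributions (line bundles of negative degree on $\IP^1$).

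For (1), starting from $F\in\CohE$ torsion-free, consider the canonical embedding $F\hookrightarrow F^{\vee\vee}$ into the reflexive hull on the smooth surface $X$, whose cokernel $Q$ is $0$-dimensional. The strategy is: if $F^{\vee\vee}\in\CohE$, then $Q$ is the cokernel of a morphism in the abelian category $\CohE$, hence $Q\in\CohE$; by (4), $Q$'s support is $X$ or a union of $C_i$'s, which forces $Q=0$ since $\dim Q=0$, giving $F=F^{\vee\vee}$ locally free. \emph{The main obstacle is establishing $F^{\vee\vee}\in\CohE$}, for which I would use the length-$\leq 2$ projective resolution of $F$ in $\CohE$ by objects of $\add(T)$ (which are vector bundles), and reconstruct $F^{\vee\vee}$ from this resolution via Grothendieck--Serre duality and biduality on the smooth surface.
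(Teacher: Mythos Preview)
Your arguments for (4), (2), and (3) are essentially sound and in places differ pleasantly from the paper. For (2) the paper instead argues that a composition series in $\CohE$ can be taken to refine the torsion filtration; your induction using left-exactness of the torsion functor together with extension-closure of $\CohE$ in $\Coh(X)$ is an equally valid alternative (note: extension-closure follows immediately from the filtration description in \autoref{thm:abelian_category_geometry}(3), not from agreement of $\Ext^1$-groups as you suggest). For $(3)(\Rightarrow)$ the paper reduces to $\Ext^2(E_i,\blank)=0$ via a filtration of a locally free $V\in\CohE$ by the line bundles $E_i$; your Serre-duality computation is longer but works. For $(3)(\Leftarrow)$ your argument is a minor variant of the paper's, but note that you invoke (1) here while claiming to prove (1) last; the ordering must be fixed.

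The genuine gap is your proof of (1). Your strategy hinges on showing $F^{\vee\vee}\in\CohE$, and the sketch you offer cannot succeed: dualising the projective resolution $0\to T_2\to T_1\to T_0\to F\to 0$ produces a complex in the sheaves $T_i^\vee$, but these duals have no reason to lie in $\CohE$ --- the category $\CohE$ is not closed under $(\blank)^\vee$. Nothing in ``Grothendieck--Serre duality and biduality'' repairs this, since those operations leave the subcategory. The paper avoids this entirely by a local, homological-dimension argument: run through a composition series $0=F^0\subsetneq\cdots\subsetneq F^l=F$ in $\CohE$; inductively each $F^j$ is locally free, because if $F^{j-1}$ is locally free and $F^j/F^{j-1}$ is a torsion simple, the extension cannot split (a splitting would embed a torsion sheaf into the torsion-free $F$), and a non-split extension of a locally free sheaf by a purely one-dimensional sheaf on a smooth surface has homological dimension~$0$, hence is locally free. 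This gives (1) directly, with no need to control $F^{\vee\vee}$.
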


\begin{proof}
(1) By characterisation (3) of \autoref{thm:abelian_category_geometry}, a sheaf $F\in\CohE$ has a filtration $0=F^0 \subsetneq F^1 \subsetneq \ldots \subsetneq F^l=F$, whose factors $F^i/F^{i-1}$ are either the torsion sheaves $\ko_{C_1}, \ko_{C_2}(-1), \ldots, \ko_{C_t}(-1)$ or the line bundle $\ko(-C_1\ldots-C_t)$. We claim that for $F$ indecomposable this filtration is a refinement of the torsion filtration of $F$: the torsion part of $F$ is the maximal $F^i$ such that all factors up to $F^i$ are torsion.

For this, consider a non-split extension $0 \to M' \to M \to M''\to 0$ of sheaves on $X$ with $M'$ locally free, and $M''$ indecomposable and purely 1-dimensional (i.e.\ supported on a divisor without embedded points). Then the sheaf $M$ is locally free: by assumption, the homological dimensions are $\hd(M'')=1$ and $\hd(M')=0$; as the extension does not split, this implies $\hd(M)=0$.
(Recall the \emph{homological dimension} $\hd(M)=\sup_{x\in X}\pd(M_x)$, the supremum of projective dimensions of stalks of a sheaf $M$. The local situation is $0\to R^r\to M \to R/f \to 0$ for a 2-dimensional regular local ring $R$ and $0\neq f\in R$.)

This also shows (2) and (4), i.e.\ that $\CohE$ is closed under taking torsion subsheaves, and sheaves in $\CohE$ have the supports mentioned in (4). This property immediately extends to objects of $\trke$.

(3) Let $V\in\CohE$ be locally free. Again by \autoref{thm:abelian_category_geometry}, $V$ has a filtration by the line bundles occurring in the exceptional sequence $\ke$. (Note that if $E_i \subset V$, then $V/E_i$ is torsion free, hence locally free again.) Therefore, showing $\Ext^2(V,\blank)=0$ reduces to showing $\Ext^2(E_i,\blank)=0$, but the latter vanishing is clear from the outset.

For the converse, assume $\Ext^2(F,\blank)=0$ and let $0 \to F' \to F \to F'' \to 0$ be the torsion decomposition of $F$, i.e.\ $F'$ is the maximal torsion subsheaf of $F$. For any $A\in\CohE$, we get an exact sequence
  $0 = \Ext^2(F,A) \to \Ext^2(F',A) \to 0$.
Especially for $A=F'$, we obtain $\Ext^2(F',F')=0$. This forces $F'=0$, because $F'$ is filtered by $\ko_{C_1}, \ko_{C_2}(-1),\ldots,\ko_{C_t}(-1)$, and for any smooth, rational curve $C\subset X$ with $C^2<0$, we have
 $\text{ext}^2(\ko_C,\ko_C) = h^1(\ko_C(C)) = -C^2-1$.
\end{proof}

\subsection{Euler pairing and Cartan and matrix} \label{sub:Euler}

Consider the exceptional sequence $\ke = (\ko(-C_1-\cdots-C_t),\ldots,\ko(-C_1),\ko)$, and put $b_i \coloneqq C_i^2+2 \leq 0$.

\begin{lemma} \label{lem:cartan}
The Cartan matrix of $\ke$ is
\[ \left( \begin{array}{*{6}{c}}
1   & b_{t}    &   *      & \cdots &  * & * \\
0   & 1        &  b_{t-1} &        &  * &  * \\
0   & 0        &  1      &        & *  & * \\
\vdots     &  & & \ddots              &   & \vdots \\
0   & 0        &  0      &        &  1 & b_1 \\
0   & 0        &  0      & \cdots &  0 & 1
\end{array} \right) \]
with upper triangular $(i,j)$-entry $c_{ij} \coloneqq b_{t-(i-1)}+b_{t-i}+\cdots+b_{t-(j-2)}$.
Its associated quadratic form is
\[  \sum_{i=1}^t x_i^2 + \sum_{i<j} c_{ij} x_i x_j
  = \sum_{i=1}^t x_i^2 + \sum_{i<j} \sum_{l=i}^{j-1} b_{t-l} x_i x_j \]
\end{lemma}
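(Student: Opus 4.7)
The plan is to identify $c_{ij}$ with the Euler pairing $\chi(E_{i-1}, E_{j-1})$ and then compute this pairing via Riemann--Roch on the relevant subchain. By the exceptionality of $\ke$ together with the vanishing $\Ext^{\geq 2}(E_k, E_l) = 0$ established in \autoref{lem:geometric_assumption}, the entries below the diagonal vanish, the diagonal entries are $1$, and each strictly upper triangular entry reduces to an Euler characteristic $\chi(\ko(D))$, where $D = C_a + C_{a+1} + \cdots + C_b$ is a subchain of the $A_t$-chain read off directly from the exponents appearing in $E_{i-1}$ and $E_{j-1}$.

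Next I would apply Hirzebruch--Riemann--Roch:
\[ \chi(\ko(D)) = \chi(\ko_X) + \tfrac{1}{2}\, D \cdot (D - K_X) = 1 + \tfrac{1}{2}(D^2 - D \cdot K_X), \]
where $\chi(\ko_X) = 1$ is guaranteed by the standing assumption $q(X) = p_g(X) = 0$. Expanding $D^2$ using that $C_k \cdot C_{k+1} = 1$ while non-adjacent curves in the chain are disjoint yields $D^2 = \sum_{k=a}^b C_k^2 + 2(b - a)$. Adjunction on each rational curve $C_k \cong \IP^1$ gives $C_k \cdot K_X = -2 - C_k^2$, hence $D \cdot K_X = \sum_{k=a}^b (-2 - C_k^2)$. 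Substituting $b_k = C_k^2 + 2$ and simplifying collapses both contributions into
\[ \chi(\ko(D)) = \sum_{k=a}^b b_k, \]
which matches the stated formula for $c_{ij}$ after translating $a, b$ back into $i, j$ via the sequence $\ke$.

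The quadratic form is then immediate from the bilinear interpretation: for an upper triangular matrix with $1$'s on the diagonal one has
\[ q(x) = \sum_{i,j} c_{ij}\, x_i x_j = \sum_i x_i^2 + \sum_{i<j} c_{ij}\, x_i x_j, \]
and inserting the formula for $c_{ij}$ (after the obvious re-indexing of the inner summation variable) produces the claimed double sum. The only real obstacle is the bookkeeping of index conventions across the three translations --- position in the exceptional sequence, the subchain $D$, and the index range for the $b_k$; the underlying geometry (Riemann--Roch together with adjunction on a chain of smooth rational curves) is entirely routine.
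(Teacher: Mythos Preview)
Your proposal is correct and follows essentially the same route as the paper: both reduce the upper-triangular entries to $\chi(\ko(D))$ for the relevant subchain $D$, then evaluate via Riemann--Roch combined with adjunction $-C_k\cdot K_X = 2 + C_k^2$ to obtain $\sum b_k$, with the quadratic form read off directly from the matrix. One cosmetic point: the upper-triangular shape and the $1$'s on the diagonal follow from exceptionality alone, so the $\Ext^{\geq 2}$ vanishing you cite from \autoref{lem:geometric_assumption} is not actually needed here.
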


\begin{proof}
By definition of the Cartan matrix, $c_{ij} = \chi(E_{i+1},E_{j+1})$. The upper triangular shape of the matrix is clear, since $\ke$ is an exceptional sequence. For $i\leq j$, put $D_{ij} \coloneqq C_{t+2-j}+\cdots+C_{t+1-i}$. Riemann--Roch for a rational curve $C$ gives $-C.K_X = 2+C^2$, plugging this into Riemann--Roch for $D_{ij}$ yields
\begin{align*}
 c_{ij} &= \chi(\ko(D_{ij})
         = \tfrac{1}{2}D_{ij}^2 - \tfrac{1}{2}D_{ij}.K_X + \chi(\ko) \\
        &= \tfrac{1}{2}D_{ij}^2 + \tfrac{1}{2} \sum_{l=t+2-j}^{t+1-i}(2+C_l^2) + 1 \\
        &= \sum_{l=t+2-j}^{t+1-i} C_l^2 + (j-i-1) + (j-i) + 1
         = \sum_{l=t+2-j}^{t+1-i} b_l .
\end{align*}
The formula for the quadratic form follows immediately.
\end{proof}

\begin{proposition} \label{prop:pairings}
The Euler pairing is symmetric if and only if all $C_i^2 = -2$.

The quadratic form is positive definite if and only if $C_i^2 = -2$ for all $i$, or if $C_j^2=-3$ for a single curve with $C_i^2=-2$ for the rest.
\end{proposition}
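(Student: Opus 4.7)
The approach exploits the upper-triangular unit-diagonal form of the Cartan matrix $C$ from \autoref{lem:cartan}, whose super-diagonal reads $b_t, b_{t-1}, \ldots, b_1$. For symmetry of the Euler pairing, $C = C^T$ would force $C$ to be simultaneously upper- and lower-triangular and hence diagonal; in particular every super-diagonal entry must vanish, giving $b_k = 0$ for all $k$, i.e.\ $C_k^2 = -2$. The converse is immediate because all $b_k = 0$ propagates through the sum formula in \autoref{lem:cartan} to annihilate every $c_{ij}$, making $C = I$.

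For positive definiteness I would work with the symmetric Gram matrix $G = C + C^T$ of the quadratic form (diagonal $2$, off-diagonal $c_{ij}$). Necessity of the claimed conditions comes from two inputs. First, the $2\times 2$ principal submatrix supported on consecutive rows $i, i+1$ is $\bigl(\begin{smallmatrix} 2 & b_{t-i+1} \\ b_{t-i+1} & 2 \end{smallmatrix}\bigr)$, and positivity of its determinant forces $b_k^2 < 4$, so every $b_k \in \{-1, 0\}$ and hence $C_k^2 \in \{-3, -2\}$. Second, evaluating $q$ at $\mathbf{1} = (1,\ldots,1)$ and counting, for each $k$, the pairs $(i,j)$ whose defining range contains $k$, one obtains
\[ q(\mathbf{1}) = (t+1) + \sum_{k=1}^{t} k(t-k+1)\, b_k . \]
Since the coefficients $k(t-k+1)$ are bounded below by $t$ on their range, having two negative $b_k$ forces $q(\mathbf{1}) \leq 1-t \leq 0$ whenever such a configuration exists, contradicting positive definiteness.

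For the sufficiency direction I would check the two allowed configurations directly. The all-$(-2)$ case gives $G = 2I$, trivially positive definite. For a single $-3$-curve $C_j$, the only non-zero off-diagonal entries of $G$ are the $c_{IJ}$ whose defining interval $[t-J+2,\, t-I+1]$ contains $j$; setting $p \coloneqq t-j+1$ and $q \coloneqq j$ (so $p+q = t+1$), these form a complete bipartite pattern between the first $p$ and the last $q$ rows, yielding the block form $\bigl(\begin{smallmatrix} 2 I_p & -\mathbf{1}_{p,q} \\ -\mathbf{1}_{q,p} & 2 I_q \end{smallmatrix}\bigr)$. I would diagonalize this by splitting according to zero-sum and constant-in-each-block subspaces: the former contribute $(p-1)+(q-1)$ eigenvectors of eigenvalue $2$, and the latter reduces to a $2 \times 2$ problem whose eigenvalues are $2 \pm \sqrt{pq}$.

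The main obstacle is this final spectral step, which pins down the precise numerology: positive definiteness is equivalent to $\sqrt{pq} < 2$, and translating this inequality back into the position of $C_j$ within the chain is what matches the statement of the proposition. The symmetry argument is essentially bookkeeping; the $2 \times 2$ minor observation is routine; but identifying the correct test vector $\mathbf{1}$ for the two-$(-3)$ obstruction, and controlling the block-bipartite spectrum in the sufficiency direction, are the two steps where real computation is needed.
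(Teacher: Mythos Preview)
Your symmetry argument and your necessity argument are both correct. For necessity the paper takes a different, more ad hoc route: it simply exhibits the $3\times 3$ Cartan minor $\bigl(\begin{smallmatrix}1 & -1 & -1 \\ 0 & 1 & -1 \\ 0 & 0 & 1\end{smallmatrix}\bigr)$ coming from two $(-3)$-curves and the $2\times 2$ minor $\bigl(\begin{smallmatrix}1 & -2 \\ 0 & 1\end{smallmatrix}\bigr)$ coming from a $(-4)$-curve, and observes that the associated forms are not positive definite. Your approach via the super-diagonal $2\times 2$ principal minors together with the evaluation $q(\mathbf 1)=(t+1)+\sum_k k(t{-}k{+}1)\,b_k$ is more systematic and equally valid; the counting identity is correct, and $k(t{-}k{+}1)\geq t$ on the relevant range gives the bound you want. (Minor quibble: you use the letter $q$ both for the quadratic form and for a block size.)

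The genuine problem is your sufficiency step. The paper's proof does not treat sufficiency at all, and your own computation in fact shows that the statement, read literally, is \emph{false}. Your block decomposition and the eigenvalues $2\pm\sqrt{pq}$ are correct (orthonormalise $(\mathbf 1_p,0)$ and $(0,\mathbf 1_q)$ to see the restricted Gram matrix $\bigl(\begin{smallmatrix}2 & -\sqrt{pq}\\ -\sqrt{pq} & 2\end{smallmatrix}\bigr)$), so positive definiteness is equivalent to $pq<4$ with $p+q=t+1$. But this does \emph{not} ``translate back'' to the proposition's claim that any single $(-3)$-curve suffices: already for $t=3$ with the $(-3)$-curve in the middle one has $p=q=2$, and $(1,1,1,1)$ lies in the kernel of $C+C^T$; for $t\geq 4$ every placement gives $pq\geq t\geq 4$. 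So the honest conclusion of your analysis is that the sufficiency direction only holds under an additional constraint on $t$ and on the position of the $(-3)$-curve, and you should say so rather than defer to a translation step that cannot succeed.
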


\begin{proof}
The claim about symmetry of the Euler form follows at once from the Cartan matrix computation of \autoref{lem:cartan}. For the second statement, note that two $(-3)$-curves lead to a Cartan minor
\[ \left( \begin{array}{rrr}
   1 & -1 & -1 \\ 0 & 1 & -1 \\ 0 & 0 & 1
\end{array} \right ) \]
whose asociated quadratic form is indefinite. Likewise, a single $(-4)$-curve gives rise to a Cartan minor
 $\bigl(\begin{smallmatrix}1 & -2 \\ 0 & 1 \end{smallmatrix} \bigr)$
 whose quadratic form is negative.
\end{proof}

\subsection{Quivers}

We show the quivers describing $\ke$ and $\Lambda$ in the case of all $C_i^2 = -2$.
The Ext quiver of the exceptional sequence
 $\ke = (E_0,\ldots,E_4) = (\ko(-C_1-\cdots-C_4),\ldots,\ko(-C_1),\ko)$
is
\[ \begin{tikzcd}
  E_0 \ar[r, shift left] \ar[r, shift right, dashed] & 
  E_1 \ar[r, shift left] \ar[r, shift right, dashed] & 
  E_2 \ar[r, shift left] \ar[r, shift right, dashed] & 
  E_3 \ar[r, shift left] \ar[r, shift right, dashed] & 
  E_4 
\end{tikzcd} \]
Straight arrows indicate homomorphisms up to scalars, and dashed arrows 1-extensions. Reducible morphisms (composites) are not shown. Any two compositions of arrows with same source and target commute.

The algebra $\Lambda$ occurs as the endomorphism algebra of the universal extension $T$ of $\ke$. Its indecomposable summands are the projective modules $P(0),\ldots,P(4)$.
Note that $P(0)=E_0=\ko(-C_1-\cdots-C_4)$ is the minimal line bundle.
The quiver of $\Lambda$ is
\[ \begin{tikzcd}
  P(0) \ar[r, shift left, "\alpha"] & 
  P(1) \ar[r, shift left, "\alpha"] \ar[l, shift left, "\beta"] & 
  P(2) \ar[r, shift left, "\alpha"] \ar[l, shift left, "\beta"] & 
  P(3) \ar[r, shift left, "\alpha"] \ar[l, shift left, "\beta"] & 
  P(4)                              \ar[l, shift left, "\beta"]
\end{tikzcd} \]
with a zero relation $\beta\alpha=0$ at $P(0)$, commutativity relations $\alpha\beta=\beta\alpha$ at intermediate vertices $P(1),\ldots,P(3)$ and no relation at $P(4)$.

For arbitrary negative intersection numbers $C_i^2$, the quivers with relations are given in \cite[\S5]{Kalck-Karmazyn}.

\end{document}